\newcommand{\C}{{\mathbb C}}
\newcommand{\const}{\operatorname{const.}}
\newcommand{\dvol}{\operatorname{dvol}}
\newcommand{\Hess}{\operatorname{Hess}}
\newcommand{\HH}{\operatorname{H}}
\newcommand{\R}{{\mathbb R}}
\newcommand{\Ric}{\operatorname{Ric}}
\newcommand{\Rm}{\operatorname{Rm}}
\newcommand{\Tr}{\operatorname{Tr}}
\newcommand{\vol}{\operatorname{vol}}
\newcommand{\Z}{{\mathbb Z}}
\numberwithin{equation}{section}
\theoremstyle{plain}
\newtheorem{definition}{Definition}
\newtheorem{lemma}{Lemma}
\newtheorem{theorem}{Theorem}
\newtheorem{proposition}{Proposition}
\newtheorem{corollary}{Corollary}
\theoremstyle{remark}
\newtheorem{example}{Example}
\begin{document}

\title[Long-time behavior of Ricci flow on some complex surfaces]
      {Long-time behavior of Ricci flow on some complex surfaces}

\author{John Lott}
\address{Department of Mathematics\\
University of California, Berkeley\\
Berkeley, CA  94720-3840\\
USA} \email{lott@berkeley.edu}

\begin{abstract}
 We give biLipschitz models for the Ricci flow on some 4-manifolds (minimal surfaces of general type), exhibiting a combination of expanding and static behavior.
\end{abstract}

\date{May 3, 2026}

\thanks{This material is based upon work supported by the National Science
Foundation under Grant No. DMS-1928930, while the author was in
residence at the Simons Laufer Mathematical Sciences Institute
(formerly MSRI) in Berkeley, California, during the Fall 2024
semester.}

\maketitle

\section{Introduction} \label{sect1}

The long-time behavior of the Ricci flow, when it exists,
gives a way to construct canonical metrics on manifolds. Such canonical metrics include
Einstein metrics and, more generally, Ricci solitons.
For higher genus surfaces, there is a limit $\lim_{t \rightarrow \infty} t^{-1} g(t)$ which is a
metric of constant sectional curvature in the conformal class of $g(0)$. For compact $3$-manifolds,
there is a conjectural picture for the long-time behavior \cite{Lott (2010)}, although there are still
open questions.

In this paper we look at a four dimensional Ricci flow in which the long-time model is a hybrid of
expanding behavior and static behavior.  We state the results and motivate them afterward.

If $g_1$ and $g_2$ are two Riemannian metrics on a manifold, and $K \ge 1$, then we say that the metrics
are $K$-biLipschitz (relative to the identity map) if
$K^{-1} g_1 \le g_2 \le K g_1$. Explicit model flows $g_{mod}(t)$ and
$g^{(k)}_{mod}(t)$ will be described later.

We will identify a K\"ahler metric $g$ with its associated K\"ahler form $\omega$.

\begin{theorem} \label{thm1}
Let $M$ be a minimal complex surface of general type
whose canonical model $X$ has only isolated $\Z_2$-quotient singularities.
Equivalently, the exceptional locus of $M \rightarrow X$ is a disjoint union 
of rational curves $\{E_i\}$
with self intersection $-2$.
Let $[E_i] \in \HH^{1,1}(M; \R)$ be the cohomology class dual to the homology class of $E_i$.
Given positive numbers $\{b_i\}$, there is a model flow of K\"ahler metrics
$g_{mod}(t)$ on $M$, defined for large $t$, so that 
\begin{itemize}
\item As cohomology classes, $[\omega_{mod}(t)] = - \pi \sum_i b_i^{-1} [E_i] - 2 \pi t c_1(M)$, and
\item For all sufficiently large $T$, the Ricci flow solution $g(t)$ defined for $t \ge T$ with 
initial condition $g(T) = g_{mod}(T)$ is such
that for all $\epsilon > 0$, $g(t)$ is $K(t)$-biLipschitz to $g_{mod}(t)$, where $K(t) = 1 + O \left( 
t^{- \: \frac23 +\epsilon} \right)$.
\end{itemize}
\end{theorem}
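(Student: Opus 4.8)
The plan has two parts: first build $g_{mod}(t)$ by a gluing construction, a fixed Eguchi--Hanson metric near each $E_i$ welded to a dilating singular Kähler--Einstein metric in the bulk; then show that the genuine Kähler--Ricci flow started at $g_{mod}(T)$ differs from $g_{mod}(t)$ only by $i\partial\bar\partial\varphi(t)$ for a potential $\varphi$ whose Hessian, measured against $g_{mod}(t)$, decays like $t^{-1+\epsilon}$.

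\emph{The model.} Let $\pi\colon M\to M_{can}$ contract the $E_i$ to the canonical model. As the $E_i$ are disjoint $(-2)$--curves, $M_{can}$ has isolated $A_1$ (that is, $\C^2/\{\pm 1\}$) orbifold points $p_i$, the minimal resolution is crepant so $K_M=\pi^*K_{M_{can}}$ and $K_M\cdot E_i=0$, and by the orbifold Aubin--Yau theorem $M_{can}$ carries a Kähler--Einstein metric $\omega_{KE}$ with $\Ric(\omega_{KE})=-\omega_{KE}$, smooth on the uniformizing $\C^2$ covers near the $p_i$. On the resolution side, the minimal resolution of $\C^2/\{\pm 1\}$ carries the Eguchi--Hanson metric $\omega_{EH}$: hyperkähler, hence Ricci--flat, hence a static Ricci flow, ALE of order $4$, with $\int_{E_i}\omega_{EH}$ proportional to its size parameter. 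Define $\omega_{mod}(t)$ to be $t\,\pi^*\omega_{KE}$ away from the curves, a fixed $\omega_{EH}$ of the size prescribed by $b_i$ near each $E_i$, and an interpolation of the two Kähler potentials on annular necks at a $t$--dependent radius $r(t)$, chosen so that, after the relevant dilation, both pieces are $C^k$--close there to the common flat cone $\C^2/\{\pm 1\}$ (so $r(t)\to\infty$ in the Eguchi--Hanson scale while $r(t)/\sqrt t\to 0$ in the $\omega_{KE}$ scale). One then checks that $[\omega_{mod}(t)]=\sum_i b_i^{-1}[E_i]-2\pi t\,c_1(M)$, using $[t\,\pi^*\omega_{KE}]=2\pi t\,c_1(K_M)=-2\pi t\,c_1(M)$ and that the localized Eguchi--Hanson form contributes a multiple of $[E_i]$; that $\omega_{mod}(t)$ is Kähler for large $t$; and that the flow defect $\mathcal E(t):=\partial_t\omega_{mod}(t)+\Ric(\omega_{mod}(t))$ is supported in the necks and, suitably normalized, is integrable in time and small, of size $O(t^{-1-\delta})$ for a $\delta>0$ got by optimizing $r(t)$ against the ALE order and the order of osculation of $\omega_{KE}$ to the flat cone (each piece solves the flow exactly, so $\mathcal E$ only sees the interpolation).

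\emph{The perturbation equation.} Since $c_1(K_M)$ is nef — indeed $K_M$ is semiample — $[\omega(t)]=[\omega_{mod}(T)]+(t-T)\,2\pi c_1(K_M)$ stays Kähler, so by Tian--Zhang the flow $g(t)$ with $g(T)=g_{mod}(T)$ exists for all $t\ge T$ and $[\omega(t)]=[\omega_{mod}(t)]$. Writing $\omega(t)=\omega_{mod}(t)+i\partial\bar\partial\varphi(t)$ and, via the $\partial\bar\partial$--lemma, $-\mathcal E(t)=i\partial\bar\partial\psi(t)$ with $\psi$ controlled by $\mathcal E$, the function $\varphi$ solves
\[
\partial_t\varphi=\log\frac{\big(\omega_{mod}(t)+i\partial\bar\partial\varphi\big)^2}{\omega_{mod}(t)^2}+\psi(t)+c(t),\qquad \varphi(T)=0,
\]
a parabolic complex Monge--Ampère equation with small, neck--supported forcing. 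The $C^0$ bound for $\varphi$ comes from the maximum principle applied to $\varphi\mp A\int_T^t\|\psi(s)\|_\infty\,ds$, finite by the previous paragraph. To upgrade this to the metric estimate one runs the parabolic Aubin--Yau second--order estimate with Evans--Krylov and Schauder theory, localized and taken against the moving background $\omega_{mod}(t)$ (whose curvature is uniformly bounded in the dilated pieces and in the necks), together with exponential stability of the model pieces: rewritten in the variable $\tau=\log t$ the bulk flow is to leading order the \emph{normalized} Kähler--Ricci flow near $\omega_{KE}$, whose linearization $\partial_\tau u=\Delta_{\omega_{KE}}u-u$ is strictly contracting on potentials modulo constants, while near each $E_i$ the Ricci--flat ALE metric $\omega_{EH}$ is a stable static solution. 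The final rate is set by the \emph{neck regions}, whose $\omega_{mod}(t)$--diameter grows with $t$ and which therefore carry the slowest--relaxing modes; a weighted estimate there yields $\|i\partial\bar\partial\varphi(t)\|_{\omega_{mod}(t)}=O(t^{-1+\epsilon})$, the $\epsilon$ absorbing logarithmic losses from the localization, and this is exactly the asserted $K(t)$.

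\emph{Main obstacle.} The crux is the neck analysis: the transition regions are long and grow with $t$, their geometry interpolating between a Ricci--flat ALE end and a dilating Kähler--Einstein cone, and one must run the parabolic estimates there uniformly over the infinite time interval with a weight sharp enough to produce the $t^{-1+\epsilon}$ rate and to close the a priori estimates. Subsidiary difficulties are making all the estimates uniform for a background that is at once dilating (the bulk, scale $\sim t$) and of bounded, non--collapsed geometry (the Eguchi--Hanson regions), and the limited regularity of $\omega_{KE}$ at the orbifold points, which I would treat by working $\Z/2$--equivariantly on the uniformizing covers where $\omega_{KE}$ is smooth.
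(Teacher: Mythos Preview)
Your outline captures the right architecture, but it contains a genuine gap at the heart of the construction: the assertion that ``each piece solves the flow exactly, so $\mathcal E$ only sees the interpolation'' is false for the Eguchi--Hanson piece.

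The problem is this. Near $E_i$ you must work in a fixed holomorphic chart on $M$ (the complex structure is not moving), and in that chart the static Eguchi--Hanson metric is asymptotic at infinity to a \emph{fixed} flat cone, with potential $\sim |z|^2$. The bulk $t\,\omega_{KE}$, in the same chart near the orbifold point, has potential $\sim t|z|^2$. These cannot be glued by a small interpolation; to match the expanding bulk you are forced to replace $\omega_{EH}$ by its pullback under the fiber rescaling $z\mapsto\sqrt{bt}\,z$ (times $b^{-1}$), giving a \emph{time-dependent} family of mutually isometric Ricci-flat metrics. That family is \emph{not} a Ricci flow: the paper computes the defect of the potential flow to be
\[
\frac{\partial\phi_{EH}^{(0)}}{\partial t}-\log\det(\partial_i\bar\partial_j\phi_{EH}^{(0)})
=\frac{1}{bt}\sqrt{1+b^2t^2|z|^4},
\]
which at the neck scale $|z|\sim t^{-a}$ is $O(t^{-2a})$. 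Since one needs $a<\tfrac12$ for the neck to sit in the asymptotic region of Eguchi--Hanson, this is strictly worse than $O(t^{-1})$, and your claimed $O(t^{-1-\delta})$ is unattainable with the naive model. The paper's remedy is to add an explicit first correction $G^{(1)}$ (solving an ODE in the radial variable), producing $\phi_{EH}^{(1)}$ whose defect improves to $O\big((|z|+t^{-1/2})^4\big)$; the gluing is then done at $a=\tfrac14$, and the neck error is controlled by exploiting that the leading discrepancies $tC_{a\bar b c\bar d}z^a\bar z^bz^c\bar z^d-\tfrac16 t|z|^4$ and $\tfrac{1}{2b^2t|z|^2}$ are harmonic for the flat Laplacian (the first by the Einstein condition). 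Even so, the resulting $f_{mod}$ is only $O(t^{-1})$ in sup norm, not $O(t^{-1-\delta})$.

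This has a knock-on effect on your perturbation argument. Because $\int^\infty t^{-1}\,dt$ diverges, the maximum-principle barrier $\varphi\mp A\int_T^t\|\psi\|_\infty$ does not give a bounded $C^0$ estimate, and the Aubin--Yau/Evans--Krylov route does not close as stated. The paper instead works in weighted parabolic H\"older spaces $X^{l,\alpha}_{\gamma,\sigma,\Lambda}$ (with weight $t^\gamma(|z|+t^{-1/2})^\sigma$, $\gamma=\tfrac32-\epsilon$), shows $f_{mod}$ lies in the right space, and solves the potential equation by a Schauder fixed-point argument \`a la Brendle--Kapouleas; the weighted estimate already yields $|u|=O(t^{-1})$, and a final pass with the parabolic Schauder lemma upgrades $|\partial\bar\partial u|$ to $O(t^{-1+\epsilon})$. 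Your stability heuristics for the two model pieces are correct in spirit, but the actual mechanism that produces the rate is the weighted norm, not a spectral-gap argument.
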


\begin{corollary} \label{cor1}
In the setting of Theorem \ref{thm1}, if $m \in E_i$ then $\lim_{s \rightarrow \infty} (M, g(s+\cdot), m)$
exists in the pointed
Lipschitz Cheeger-Hamilton topology. The limit is the static flow of $b_i^{-1}$ times the Eguchi-Hanson metric.
\end{corollary}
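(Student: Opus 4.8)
The plan is to deduce the statement from Theorem \ref{thm1} by first reducing it to a question about the model flow and then reading off the behaviour of the model flow near $E_i$. Fix $m \in E_i$ and a compact time interval $[-\tau,\tau]$. By Theorem \ref{thm1}, for large $t$ the metric $g(t)$ is $K(t)$-biLipschitz to $g_{mod}(t)$ with $K(t) = 1 + O(t^{-1+\epsilon})$; hence for any sequence $s_j \to \infty$ the time-shifted flows $g(s_j + \cdot)$ and $g_{mod}(s_j + \cdot)$ are $(1 + O(s_j^{-1+\epsilon}))$-biLipschitz on $[-\tau,\tau]$. Since convergence in the pointed Lipschitz Cheeger--Hamilton topology constrains the metrics only up to biLipschitz equivalence, uniformly on compact subsets of space and time, composing with the diffeomorphisms that witness either convergence shows that $(M, g(s_j+\cdot), m)$ converges if and only if $(M, g_{mod}(s_j + \cdot), m)$ does, and to the same limit. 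So it suffices to identify $\lim_{s\to\infty}(M, g_{mod}(s+\cdot),m)$.

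Next I would use the structure of $g_{mod}$ near $E_i$ that comes from its construction. Identify a tubular neighbourhood of $E_i$ with a neighbourhood of the zero section in $T^*\mathbb{CP}^1 \cong \mathcal{O}_{\mathbb{CP}^1}(-2)$. Since $E_i$ is a rational curve with $E_i^2 = -2$, adjunction gives $c_1(M)\cdot E_i = 0$, and the $E_j$ are disjoint, so on any such neighbourhood the class $[\omega_{mod}(t)] = \sum_j b_j^{-1}[E_j] - 2\pi t\,c_1(M)$ restricts to $b_i^{-1}[E_i]$, a fixed multiple of the generator of $\HH^2 \cong \R$, independently of $t$. I expect the construction of $g_{mod}$ to be arranged so that, as $t \to \infty$, on an exhaustion $\{U_t\}$ of $T^*\mathbb{CP}^1$ by neighbourhoods of the zero section, $g_{mod}(t)$ restricted to $U_t$ is asymptotic, without rescaling and in $C^\infty_{\mathrm{loc}}$, to the complete ALE Ricci-flat K\"ahler metric in that class, namely $b_i^{-1}$ times the Eguchi--Hanson metric $g_{EH}$; whereas on $M \setminus U_t$ the metric $g_{mod}(t)$ expands (being asymptotically $t$ times the orbifold K\"ahler--Einstein metric on the canonical model), so that $d_{g_{mod}(t)}(m, M\setminus U_t) \to \infty$.

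Granting this, the pointed limit follows: for fixed $R,\tau>0$ and all large $s$, every ball $B_{g_{mod}(s+t)}(m,R)$ with $|t|\le\tau$ lies inside $U_{s+t}$, where $g_{mod}(s+t)$ is $C^\infty_{\mathrm{loc}}$-close to the time-independent metric $b_i^{-1}g_{EH}$; so $(M, g_{mod}(s+\cdot),m)$ converges as $s\to\infty$ to the static flow $(T^*\mathbb{CP}^1, b_i^{-1}g_{EH}, \widetilde m)$, with $\widetilde m$ the point of the zero section over $m$, and the limit is independent of the sequence. By the first step, $(M, g(s+\cdot),m)$ converges to the same limit, which proves the Corollary.

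I expect the main obstacle to be the input from the second step, which is genuinely about $g_{mod}$ rather than about the Ricci flow: one must understand the transition region of $g_{mod}(t)$ --- interpolating between the Eguchi--Hanson part near $E_i$, of fixed size $\sim b_i^{-1}$, and the expanding K\"ahler--Einstein part over the canonical model --- uniformly as $t\to\infty$, so as to know simultaneously that the Eguchi--Hanson part exhausts all of $T^*\mathbb{CP}^1$ and that its complement recedes to spatial infinity. A minor point, depending on the precise meaning of the Lipschitz Cheeger--Hamilton topology, is whether the $C^0$ estimate $K(t) = 1 + O(t^{-1+\epsilon})$ of Theorem \ref{thm1} is used as is or is first bootstrapped, via interior derivative estimates on $U_t$ where the curvature of $g(t)$ is uniformly bounded, to a higher-order convergence; either way that step is routine.
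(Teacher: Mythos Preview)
Your proposal is correct and is essentially the argument the paper has in mind; the paper does not write out a separate proof of the corollary. The structure of $g_{mod}$ you anticipate in the second step is exactly what is built: on the cap one has $g_{mod}^{(0)}(t)=b_i^{-1}\alpha_{\sqrt{b_it}}^*\,g_{EH}$, where $\alpha_\tau$ is the fiberwise dilation $z\mapsto\tau z$ (see the proof of Lemma~\ref{lem5}), so the $s$-dependent diffeomorphisms witnessing the limit are $\Phi_s=\alpha_{\sqrt{b_is}}^{-1}$; under $\Phi_s$ the cap $\{|z|\le\tfrac12 t^{-a}\}$ becomes a region of $w$-radius comparable to $t^{1/2-a}$ and hence exhausts $T^*\C P^1$ since $a<\tfrac12$, while $\Phi_s^*g_{mod}(s+t)\to b_i^{-1}g_{EH}$ uniformly on compact sets for bounded $t$ because the correction from $G^{(1)}$ is $O(s^{-1})$. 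This is precisely the content of the remark following the corollary that one must perform $s$-dependent diffeomorphisms.
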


We note that to get a limit in Corollary \ref{cor1}, one must perform $s$-dependent diffeomorphisms.

The canonical model $X$ is an orbifold that admits a
K\"ahler-Einstein metric of negative Einstein constant.  If $X$ is complex hyperbolic then
there is an improved convergence statement.

\begin{theorem} \label{thm2}
Suppose that $X$ is a complex hyperbolic orbifold. 
For every $\epsilon > 0$, there is a model flow $g_{mod}^{(k)}(t)$ so that the result of Theorem \ref{thm1} holds with
$K(t) = 1 + O \left( 
t^{- 2 + \epsilon} \right)$.
\end{theorem}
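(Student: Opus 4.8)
The plan is to upgrade the model of Theorem~\ref{thm1} by one order. Recall that that model is patched from exact solutions of the Ricci flow: on the bulk it is essentially the trivial expanding soliton $(a+2t)\,g_{KE}$, where $g_{KE}$ is the Kähler--Einstein metric on the orbifold $X$ (pulled back to $M$); on a neighborhood of each $E_i$ it is the static Ricci-flat metric $b_i^{-1} g_{EH}$; and on the necks joining them both pieces are asymptotic to the flat cone $C(S^3/\Z_2)$, which is scale-invariant and Ricci-flat. Hence $\partial_t g_{mod} + 2\Ric(g_{mod})$ is supported in the necks, where it is controlled by the two leading deviations from the flat cone: the curvature term, of relative size $\rho^2/t$ at cone-radius $\rho$, of the expanding piece near an orbifold point of $X$, and the ALE term, of relative size $\rho^{-4}$, of Eguchi--Hanson. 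Differentiating the interpolation cutoff (held at a fixed large cone-radius) one gets an error of size $\rho^{-2}\cdot \rho^2/t = O(t^{-1})$ in the neck, and this is what produces $K(t) = 1 + O(t^{-1+\epsilon})$, the $\epsilon$ coming from borderline logarithmic losses in the stability estimate.

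To do better I would build $g^{(k)}_{mod}(t)$ by adding to $b_i^{-1}g_{EH}$ a correction of size $O(t^{-1})$ that cancels the leading $\rho^2/t$ mismatch in the neck, leaving a residual error of size $O(t^{-2})$. The correction has the form $t^{-1}\phi_i$, where $\phi_i$ is a fixed symmetric $2$-tensor on the $i$-th Eguchi--Hanson manifold solving the linearized Ricci-flat equation (in DeTurck gauge) and asymptotic at ALE infinity to the quadratic tensor prescribed by the curvature of $(X,g_{KE})$ at the corresponding orbifold point; in the Kähler setting this is a linear equation for a potential with the quartic Taylor polynomial of the Kähler--Einstein potential as data. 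One must also feed this back into the bulk, i.e.\ solve the corresponding linear elliptic equation on $X$ (a Lichnerowicz-type operator, equivalently $\bar\partial$-cohomology with values in $T^{1,0}X$) reconciling the correction with the expanding metric. This is exactly where the hypothesis enters. For a general Kähler--Einstein orbifold the equation on $X$ has cokernel --- infinitesimal Einstein deformations of $g_{KE}$ --- and the matching data on the Eguchi--Hanson pieces need not be unobstructed; but when $X$ is a complex hyperbolic orbifold it is infinitesimally rigid (Calabi--Weil/Matsushima-type rigidity for ball quotients), so the equation on $X$ is solvable, and its curvature is the Bochner-flat curvature of a complex space form, so near each orbifold point the Kähler--Einstein potential is, after an explicit change of coordinates, the standard $-\log(1-|z|^2)$; this makes the $\phi_i$ constructible without obstruction. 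After a final cutoff-patching one bounds $\partial_t g^{(k)}_{mod} + 2\Ric(g^{(k)}_{mod})$ by $O(t^{-2+\epsilon})$; the residual order $t^{-2}$ is not expected to be improvable, since the corrected Eguchi--Hanson pieces are genuinely time-dependent and $\partial_t(t^{-1}\phi_i) = O(t^{-2})$.

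With the improved model the rest runs as in Theorem~\ref{thm1}: fix $T$ large, run the Ricci flow from $g(T) = g^{(k)}_{mod}(T)$, put the two flows into a common DeTurck gauge, and estimate $g(t) - g^{(k)}_{mod}(t)$ by a weighted parabolic stability argument adapted to the expanding-plus-static geometry, now with the $O(t^{-2+\epsilon})$ error as inhomogeneous term; integrating in time gives the relative bound $K(t) = 1 + O(t^{-2+\epsilon})$. The pointed Cheeger--Hamilton convergence statements carry over verbatim.

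I expect the main obstacle to be the construction-and-matching step: producing, for each orbifold point, the tensor $\phi_i$ on Eguchi--Hanson with exactly the prescribed polynomial growth and in the correct gauge, verifying that the induced source on $X$ lies in the image of the linearized operator there (the essential use of rigidity of the ball quotient, together with the special form of the complex hyperbolic curvature, which kill the obstruction present for a general $X$), and checking that $\phi_i$ decays like $\rho^{-4}$ at ALE infinity so that patching it in reintroduces no lower-order error --- along with the weight-and-logarithm bookkeeping that accounts for the $\epsilon$ and pins the rate at $t^{-2}$.
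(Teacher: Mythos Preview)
Your outline diverges from the paper's argument both in mechanism and in the role you assign to the complex hyperbolic hypothesis, and the latter is a genuine misidentification.

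The paper never solves any equation on $X$ and never invokes rigidity of the ball quotient: the bulk piece of the model remains the exact expanding soliton $t\,\omega_{KE}$ throughout, unmodified.  All corrections live on the Eguchi--Hanson cap and are $U(2)$-invariant.  Section~\ref{sect3} already builds a whole hierarchy $G^{(1)},\ldots,G^{(k)}$ by solving the radial ODEs~(\ref{3.9}), giving a cap potential $\phi_{EH}^{(k)}$ that satisfies the potential flow up to $O\!\left(s^{-(k+1)}(1+\eta^{k+1})\right)$ (Lemma~\ref{lem2}).  The actual role of the complex hyperbolic hypothesis is Lemma~\ref{lem1}: the large-$\eta$ leading term of $G^{(j)}$ is $\frac{1}{(j+1)3^{j}}\eta^{j+1}$, which is precisely the $j$-th Taylor coefficient of the complex hyperbolic potential $-3s\log(1-\eta/(3s))$.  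Thus $\phi_{EH}^{(k)}$ matches $\phi_X$ in the transition zone to order $k$ simply because the complex hyperbolic potential is itself a function of $|z|^2$ alone.  For a general K\"ahler--Einstein $X$ the quartic term $C_{a\bar b c\bar d}z^a\bar z^b z^c\bar z^d$ is not radial, and no $U(2)$-invariant cap can match it beyond its trace; that symmetry constraint on the cap---not any cokernel of a Lichnerowicz operator on $X$---is what singles out the complex hyperbolic case.  Your ``feed back into the bulk'' step and the Calabi--Weil rigidity appeal are therefore unnecessary detours.

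The other structural difference: the paper neither keeps the cutoff at a fixed cone-radius nor stops at a single correction.  It takes $k$ large and moves the gluing scale to $|z|\sim t^{-a}$ with $a=1/k$ small.  Then $\phi_X-\phi_{EH}^{(k)}$ is asymptotic to $\frac{1}{2b^2 t|z|^2}$ plus a negligible $t|z|^{2(k+1)}$ (proof of Lemma~\ref{lem6}), the harmonic term $\frac{1}{t|z|^2}$ interacts with the cutoff to give $f_{mod}^{(k)}=O(t^{4a-2})$, and \emph{this}---not the $\partial_t(t^{-1}\phi)$ term you flag, which the higher $G^{(j)}$ absorb---is what sets the rate $t^{-2+\epsilon}$.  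After that the proof is verbatim that of Theorem~\ref{thm1} with $\gamma=2-\epsilon$ in place of $\gamma=\tfrac32-\epsilon$.
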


There is also a stability result in this case (Proposition \ref{prop1}).

To motivate the theorems, we begin with a general discussion of K\"ahler-Ricci flow. Let $(M, g(t))$ be a K\"ahler-Ricci flow on a compact complex manifold $M$ of arbitrary dimension, with initial metric
$g(0)$.  Tian and Zhang showed that
the flow is immortal, i.e. exists for all positive time, if and only if the canonical bundle
$K_M$ is nef, i.e. $c_1(K_M)$ is in the closure of the K\"ahler cone of $M$
\cite{Tian-Zhang (2006)}. 

We restrict to immortal flows.
Assuming the Abundance Conjecture (which is known for K\"ahler surfaces),
Song and Tian showed that the scalar curvature $R$ is $O(t^{-1})$ in magnitude as $t \rightarrow \infty$
\cite{Song-Tian (2016)}. 
Immortal Ricci flows are divided into types III and II, depending on whether or not the sectional
curvatures decay in magnitude like $O(t^{-1})$.
If the flow is type-II, i.e. if $\|\Rm(g(t))\|_\infty$ fails to be $O(t^{-1})$,
then we can take a type-II rescaling limit.  That is,
we can find a sequence of spacetime points $\{(m_i, t_i)\}_{i=1}^\infty$ so that
$|\Rm|$ achieves its maximal time-$t_i$ value $Q_i$ at $m_i$, with $\lim_{i \rightarrow \infty}
t_i Q_i = \infty$, and so that there is a pointed
smooth Cheeger-Hamilton limit $\lim_{i \rightarrow \infty} (M_i, Q_i g(t_i + Q_i^{-1} s), m_i) = (M_\infty, g_\infty(s),
m_\infty)$ \cite[Chapter 8.2.1.3]{Chow-Lu-Ni (2006)}. Here the limit is an eternal Ricci flow,
i.e. exists for all $s \in \R$, and is not flat.  
It lives on a manifold if one has the relevant injectivity radius
lower bounds at $\{m_i\}_{i=1}^\infty$, and otherwise lives on an \'etale groupoid \cite{Lott (2007)}.  
Because of the rescaling, the limiting
scalar curvature $R(g_\infty)$ vanishes.  Then from the evolution equation for scalar curvature
$\frac{\partial R}{\partial t} = \triangle R + 2 |\Ric|^2$, one concludes that
$g_\infty(s)$ is Ricci flat and hence constant in $s$. In short, if an immortal K\"ahler-Ricci flow is
type-II then one expects to extract a nontrivial Ricci flat space in a scaling limit. 

In this paper we look at immortal K\"ahler-Ricci flows on compact complex manifolds $M$ of complex dimension two. A survey is in \cite{Tosatti (2024)}. For cohomological reasons, the volume growth
is a polynomial in the time $t$. If it is constant in $t$ then $M$ is a Calabi-Yau manifold and
Cao showed that
the K\"ahler-Ricci flow approaches the Ricci flat metric in the given K\"ahler class
\cite{Cao (1985)}. If the volume is linear in $t$ then $M$ is an elliptic surface and the
K\"ahler-Ricci flow was studied by Song and Tian \cite{Song-Tian (2007)}. We are concerned with the
remaining case, when the volume is quadratic in $t$, which was studied by
Tian and Zhang \cite{Tian-Zhang (2006)}. In this case the canonical bundle $K_M$ is big
in the sense that $c_1(K_M)^2 \neq 0$. 

Equivalently, one could say that we are looking at the 
K\"ahler-Ricci flow on projective surfaces of general type.  The K\"ahler-Ricci flow on such a 
surface may encounter singularities corresponding to undoing blowups, i.e. contracting rational curves
of self intersection $-1$ to points.  It is known how to flow through such singularities
\cite{Song-Tian (2017),Song-Weinkove (2013)}, of which there is a finite number,
so we are reduced to studying K\"ahler-Ricci flows
on minimal projective surfaces of general type.

Such a manifold $M$ has a canonical model $X$, which is an orbifold of complex dimension two
with isolated singularities.
There is a morphism $p : M \rightarrow X$, a resolution of singularities, 
so that if $x \in X$ is a regular point then
$p^{-1}(x)$ is a point, while if $x$ is a singular point then $p^{-1}(x)$ is a connected union of rational
curves $E$ of self intersection $-2$
\cite[Chapter VII.5]{Barth-Hulek-Peters-VandeVen (2004)}.
(This is more general than the setting of Theorem \ref{thm1}.)
There is a unique K\"ahler-Einstein metric $g_{KE}$ on $X$ with 
$\Ric_{KE} = - \omega_{KE}$ \cite{Kobayashi (1985)}.

If $X$ is smooth, i.e. if $M$ is already a canonical model, then the K\"ahler-Ricci flow on $M$,
starting from any initial K\"ahler metric $g(0)$, has the property that
$\lim_{t \rightarrow \infty} t^{-1} g(t) = g_{KE}$ smoothly
\cite{Cao (1985)}. On the other hand, if $X$ is not smooth then the flow on $M$ is type-II
\cite{Tosatti-Zhang (2015)}.
Tian and Zhang showed that $\lim_{t \rightarrow \infty} t^{-1} \omega(t) = \omega_{KE}$ as
a current, with smooth convergence on compact subsets of $M - \bigcup_i E_i$, where we identify the latter with the regular part of $X$
\cite{Tian-Zhang (2006)}. In particular, the geometry away from the $-2$ rational curves $\{E_i\}$
of $M$ is
asymptotically linearly expanding in $t$.
However, the asymptotic behavior of the K\"ahler-Ricci flow on all of $M$ is less clear. 

From this point on we restrict to the case where the exceptional curves $\{E_i\}$ are disjoint, i.e. when the singular points of $X$ all have isotropy group $\Z_2$.
Then $M$ can be reconstructed from $X$ as follows.  If $x$ is an orbifold point of $X$
then there is a neighborhood $U_x$ that is analytically equivalent to $B(0, \delta)/\Z_2$, 
the $\Z_2$-quotient of a ball in $\C^2$. On the other hand, there is a morphism $q : T^* \C P^1
\rightarrow \C^2/\Z_2$, a resolution of the singularity at the vertex of the cone.
It can be seen as the $\Z_2$-quotient of the blowdown $O(-1) \rightarrow \C^2$, where we
identify the blowup of $\C^2$ at the origin with the 
$O(-1)$ line bundle on $\C P^1$, and
identify $T^* \C P^1$ with $O(-2)$.
We can remove $U_x$ from $X$ and
glue in $q^{-1}(B(0, \delta)/\Z_2)$. Then $M$ is the result of doing such an operation for
each singular point of $X$.

Under the K\"ahler-Ricci flow, 
the area of each curve $E_i$ is constant in time, since the adjunction formula implies that
$\int_{E_i} c_1(K_M) = 0$.
This is in contrast to the expanding behavior
away from $\bigcup_i E_i$. Hence it is not so clear what the global model should be.
To put it another way, there is a Ricci flat K\"ahler metric on $T^* \C P^1$, the Eguchi-Hanson
metric.  At spatial infinity, it is asymptotic to $\C^2/\Z_2$. One can construct a K\"ahler metric on $M$ by taking a large piece of the
Eguchi-Hanson metric, scaling it down and gluing it onto $(X, g_{KE})$ to replace the singularities.
While one can do this at a given time, if one lets it evolve under the K\"ahler-Ricci flow then the Eguchi-Hanson region wants to
remain static, while $M$ wants to expand outside of $\bigcup_i E_i$. It isn't immediately clear how the
evolution mediates between these two conflicting tendencies.

The solution to this problem comes from the fact
that there is actually a two parameter family of Eguchi-Hanson
metrics on $T^* \C P^1$. One parameter just comes from multiplicative rescaling.  The other parameter comes from pulling back an Eguchi-Hanson metric by automorphisms of $T^* \C P^1$ 
that act by rescaling the cotangent fiber.  In terms of the morphism $q : T^* \C P^1 \rightarrow
\C^2/\Z_2$, these automorphisms
fix the exceptional $\C P^1$ and
push down to a rescaling on $\C^2/\Z_2$. While the pullback gives isometric metrics, they are different
metrics on a fixed $T^* \C P^1$.  Pulling back the Eguchi-Hanson metric 
by a rescaling $z \rightarrow \sqrt{bt} z$ on $\C^2/\Z_2$ and multiplying by $b^{-1}$,
we obtain a $1$-parameter family $g_{EH}^{(0)}(t)$ of K\"ahler metrics on
$T^* \C P^1$
that are asymptotic to
$t g_{flat}$ at spatial infinity and for which the area of the exceptional $\C P^1$ is proportionate to
$b^{-1}$, independent of $t$.  In effect, we are making an artificially
expanding family. This family can be glued to the expanding 
K\"ahler-Ricci flow solution $t g_{KE}$ on $X$, to obtain a $0^{th}$-order approximation to a
K\"ahler-Ricci flow on $M$.

The K\"ahler potential for $g_{EH}^{(0)}(t)$ is given in (\ref{3.1}).  Of course,
$g_{EH}^{(0)}(t)$ is not a Ricci flow solution; the time slices are Ricci flat but the
solution is time dependent.  One finds that the norm of the deviation from solving the Ricci flow
equation decays in time, but unfortunately it does not decay fast enough.  To this end,
we iteratively find a sequence $\{ g_{EH}^{(k)} \}_{k=1}^\infty$ of corrections to $g_{EH}^{(0)}(t)$
that are closer and closer to being K\"ahler-Ricci flow solutions.  It turns out that
$g_{EH}^{(1)}$ is good enough for Theorem \ref{thm1}; its K\"ahler potential is given in (\ref{3.18}).

To construct the model flow $g_{mod}(t)$ for Theorem \ref{thm1}, 
we glue the approximate K\"ahler-Ricci flow 
$g_{EH}^{(1)}(t)$ on $T^* \C P^1$ to the exact K\"ahler-Ricci flow $g_X(t) = t g_{KE}$
on $X$. The curvature of
$g_{EH}^{(1)}(t)$ is concentrated in the region $|z| \le \const t^{- \frac12}$. 
On the other hand,
we do the gluing
at a scale $|z| \sim t^{-a}$, with $a \in (0, \frac12)$, so that the gluing is done in the distant
conical region of $(T^* \C P^1, g_{EH}^{(1)}(t))$. We perform the gluing at the level of K\"ahler
potentials. Both $g_{EH}^{(1)}(t)$ and $g_X(t)$ are approximately conical in the gluing region but
differ in lower orders, which makes the gluing delicate.
It turns out that the best choice for $a$ is $\frac13$. To prove Theorem \ref{thm1} we use a fixed point theorem as in the paper \cite{Brendle-Kapouleas (2017)} of Brendle and Kapouleas.
Since we are in the K\"ahler setting, we can reduce the K\"ahler-Ricci flow to an
evolution equation for the K\"ahler potential, as is customary.  This means that we are dealing with a scalar
equation, which makes the analysis simpler than in \cite{Brendle-Kapouleas (2017)}.

To prove Theorem \ref{thm2}, we use the fact that as $k$ increases, the 
approximate K\"ahler-Ricci flow solutions $g_{EH}^{(k)}(t)$ become better and
better approximations, at large scale, to the evolution of a complex hyperbolic metric.
We can do the gluing at a scale $|z| \sim t^{-a}$ with $a$ arbitrarily small.  Then the
proof of Theorem \ref{thm2} is similar to that of Theorem \ref{thm1},
with the freedom in the choice of $a$ giving the improved convergence.

We mention some earlier related work. On the static side, one can form a
Ricci flat metric on a K3 manifold, using the Kummer construction,
as in the paper by Donaldson \cite{Donaldson (2012)}. One glues Eguchi-Hanson metrics to the 16
singular points in a $\Z_2$-quotient of $T^4$. In the nonK\"ahler case, Brendle and
Kapouleas constructed an ancient Ricci flow on the result of performing the Kummer construction
except reversing the orientation of half of the 16 Eguchi-Hanson spaces
\cite{Brendle-Kapouleas (2017)}. Our treatment of the analytic aspects is taken from 
\cite{Brendle-Kapouleas (2017)}.

While this paper was being written, Deruelle and Ozuch posted a preprint in which they construct
ancient and immortal Ricci flow solutions in the four dimensional nonK\"ahler case via gluing  \cite{Deruelle-Ozuch (2024)}.
They consider oriented Riemannian orbifolds that have isolated singularities 
with certain isotropy groups, such as finite subgroups of $SU(2)$,
and which satisfy a stability condition
at the singular points.  They glue in rescaled regions of Ricci flat ALE manifolds
to construct an ancient or immortal Ricci flow solution, also following
the analytic approach of 
\cite{Brendle-Kapouleas (2017)}, and
get information about the curvature as time goes to $\pm \infty$.
(We get information about the biLipschitz behavior of the metric because we start with
the K\"ahler potential and get estimates about its second derivatives, i.e. the metric, while
Deruelle and Ozuch start with the metric and get estimates about its second derivatives, i.e.
the curvature.)  They give an example of an orbifold that satisfies their stability condition
by
reversing the orientation of a complex hyperbolic orbifold. The immortal solution
obtained by gluing in the ALE spaces is nonK\"ahler.
The minimal $2$-spheres in the ALE regions have areas that increase like $t^{\frac23}$,
whereas in the K\"ahler case the areas are constant in $t$.

Some further questions are:
\begin{enumerate}
\item If one starts with any initial K\"ahler metric on $M$ in the time-$T$ K\"ahler class
of Theorem \ref{thm1}, does the
K\"ahler-Ricci flow approach the model flow?
\item Are there analogous results for all initial K\"ahler classes on $M$?  One would have to
construct model flows using the K\"ahler-Ricci flow on $X$, rather than just uniformly expanding
flows.
\item Can one extend the methods to when the K\"ahler-Einstein orbifold $X$ has isolated singularities
of arbitrary isotropy group?  One would glue in more general Ricci flat ALE K\"ahler manifolds
\cite{Kronheimer (1989)},
rather than just Eguchi-Hanson spaces.
\item Can one extend the biLipschitz closeness in Theorems \ref{thm1} and \ref{thm2} to $C^r$-closeness?
\item Can the methods be extended to elliptic fibrations? Some information about the flow is in
\cite{Song-Tian (2012)}.
\item Are there analogous results in higher dimension or for finite time singularities?
\end{enumerate}

The structure of the paper is as follows.  Section \ref{sect2} has some background information. 
In Section \ref{sect3} we construct approximate
K\"ahler-Ricci flows on $T^* \C P^1$ that are linearly expanding in time at large distances.
We then construct the model flow on $M$ in Section \ref{sect4} by gluing the approximate K\"ahler-Ricci flow on
$T^* \C P^1$ to the expanding K\"ahler-Ricci flow on the orbifold $X$. Theorem \ref{thm1} is proved in
Section \ref{sect5}, and Theorem \ref{thm2} is proved in Section \ref{sect6}.  More detailed descriptions appear at
the beginnings of the sections.

I thank Alan Reid and Song Sun for discussions, and the referee for helpful comments.

\section{Background} \label{sect2}
In this section we review some facts about the K\"ahler-Ricci flow and the Eguchi-Hanson metric.
We will use the Einstein summation convention freely.

\subsection{K\"ahler-Ricci flow and potential flow}
Given a K\"ahler manifold $M$ of complex dimension $n$, the K\"ahler form is a real $(1, 1)$-form 
$\omega$ which can be expressed in holomorphic normal coordinates at a point $p$ by
$\omega(p) = \sqrt{-1}
\sum_{i=1}^n dz^i \wedge d\overline{z}^i$.
Writing $\omega = \sum_{i,j = 1}^n
\sqrt{-1} g_{i \overline{j}} dz^i \wedge
d \overline{z}^j$ locally, the Ricci form is
\begin{equation} \label{2.1}
\Ric = - \sqrt{-1} 
\partial \overline{\partial} \log \frac{\omega^n}{\Omega} =
- \sqrt{-1} \partial \overline{\partial} \log \det \left( g_{i \overline{j}}
\right),
\end{equation}
where $\Omega = n! (\sqrt{-1})^n dz^1 \wedge d\overline{z}^1 \wedge\ldots \wedge dz^n
\wedge d\overline{z}^n$.

The K\"ahler-Ricci flow equation is 
\begin{equation} \label{2.2}
\frac{d\omega}{dt} = - \Ric(\omega).
\end{equation}
The corresponding
cohomology class satisfies $[\omega(t)] = [\omega(0)] - 2 \pi t c_1(M) \in \HH^{1,1}(M; \R)$. 
In local coordinates, if we solve the potential flow equation
\begin{equation} \label{2.3}  
\frac{\partial u}{\partial t} = \log \det \left(  \partial_i \overline{\partial}_j u
\right)
\end{equation}
then $\omega(t) = \sqrt{-1} \partial \overline{\partial} u$ is a solution of
(\ref{2.2}), provided that $\omega(t)$ is a positive $(1,1)$-form.

More globally,
suppose that
$\omega_{mod}(t)$ is a $1$-parameter family of K\"ahler forms so that
$[\omega_{mod}(t)] = [\omega_{mod}(0)] - 2 \pi t c_1(M)$. 
By the $\partial \overline{\partial}$-lemma, if $M$ is compact then we can solve
\begin{equation} \label{2.4}
\frac{d \omega_{mod}}{dt} = - \Ric(\omega_{mod}) + \sqrt{-1} \partial \overline{\partial} f
\end{equation}
for some smooth $1$-parameter
family of real-valued functions $f(t)$.  If $u$ is a solution to the potential flow equation
\begin{equation} \label{2.5}
\frac{\partial u}{\partial t} = \log
\frac{(\omega_{mod} + \sqrt{-1} \partial \overline{\partial} u)^n}{\omega_{mod}^n} 
- f
\end{equation}
then $\omega(t) = \omega_{mod}(t) + \sqrt{-1} \partial \overline{\partial} u(t)$ is a solution to (\ref{2.2}),
provided that it is a positive $(1,1)$-form.  Conversely, any solution of (\ref{2.2}) arises in this way from a
solution to (\ref{2.5}), up to changing $u$ by a function that only depends on $t$.

If $\omega_{KE}$ is the K\"ahler form of a metric with $\Ric(\omega_{KE}) = - \omega_{KE}$ then there is a
K\"ahler-Ricci flow solution $\omega(t) = t \omega_{KE}$.
As a special case, the K\"ahler potential for the complex hyperbolic metric on $B \left( 0, \sqrt{3} \right) \subset \C^2$, normalized so that $\Ric(\omega) = - \omega$, is $- 3 \log
\left( 1 - \frac{1}{3} |z|^2 \right)$. A potential for
the corresponding flow, solving (\ref{2.3}), is $u(t) = 2(t \log t - t)
- 3 t \log
\left( 1 - \frac{1}{3} |z|^2 \right)$.

\subsection{Eguchi-Hanson metric}

A reference is \cite{Lye (2022)}.
The Eguchi-Hanson metric is a Ricci flat K\"ahler metric on $T^* \C P^1$, i.e. on the total space of the
$O(-2)$-bundle on $\C P^1$. The manifold is a resolution of the cone $\C^2/\Z_2$, i.e. there is an analytic map $T^* \C P^1 \rightarrow \C^2/\Z_2$ that is a biholomorphism from the complement of the zero section in $T^* \C P^1$ to the
complement of the vertex $\star$ in $\C^2/\Z_2$.

Fixing the area of the
exceptional $\C P^1$,
there is actually a $1$-parameter family of Eguchi-Hanson metrics on $T^* \C P^1$ that differ by pullback under
fiberwise rescalings of $T^* \C P^1$.
Equivalently, the action is by rescalings of $\C^2/\Z_2 - \{\star\}$, and the identity on $\C P^1 \subset T^* \C P^1$.
Of course, the elements of the $1$-parameter family are mutually isometric, but they describe different metrics on 
the fixed manifold
$T^* \C P^1$.
We will normalize the Eguchi-Hanson metrics as follows.
Restricting 
to $\C^2/\Z_2 - \{\star\}$, we can write a K\"ahler potential for the Eguchi-Hanson metric
as a $\Z_2$-invariant function on 
$\C^2 - (0,0)$, with coordinates $\{z^1, z^2\}$.  Putting
$\rho = |z|^2$, the potentials are given by
\begin{equation} \label{2.6}
\phi_{EH, c} =  \sqrt{1 + c^2 \rho^2} + \frac{1}{2}
\log 
\frac{
\sqrt{1 + c^2 \rho^2} - 1}{
\sqrt{1 + c^2 \rho^2} + 1}
\end{equation} 
for $c > 0$. 
Their derivatives are
$\frac{d}{d\rho} \phi_{EH, c} = \frac{1}{\rho} \sqrt{1 + c^2 \rho^2}$.

Our normalization is such that as $\rho \rightarrow 
\infty$, the potential $\phi_{EH,c}$ is asymptotic to $c\rho = c|z|^2$. That is, the potential approaches $c$ times that of a flat Euclidean cone.

The Eguchi-Hanson metric is
\begin{align} \label{2.7}
g^{EH}_{i \overline{j}} & = \delta_{ij} \frac{\sqrt{1 + c^2 \rho^2}}{\rho} - 
\frac{z^{\overline{i}} z^j}{\rho^2 \sqrt{1 + c^2 \rho^2}} \\
& = \frac{\sqrt{1 + c^2 \rho^2}}{\rho}  \left( \delta_{ij} - \frac{z^{\overline{i}} z^j}{\rho} \right)  +
\frac{c^2 \rho}{\sqrt{1 + c^2 \rho^2}}
\frac{z^{\overline{i}} z^j}{\rho}
, \notag
\end{align}
where the latter expression gives an orthogonal decomposition of $g^{EH}$ in terms of $(\C \vec{z})^\perp$
and $\C \vec{z}$. The inverse metric is
\begin{equation} \label{2.8}
g_{EH}^{\overline{j} i}  = 
\frac{\rho}{\sqrt{1 + c^2 \rho^2}} \left( \delta^{ij} - \frac{z_{\overline{i}} z_j}{\rho} \right)  +
\frac{ \sqrt{1 + c^2 \rho^2}}{c^2 \rho} \frac{z_{\overline{i}} z_j}{\rho}
\end{equation}

\section{Approximate K\"ahler-Ricci flow on the caps} \label{sect3}

In this section we describe potentials for an approximate solution to the potential flow on
$T^* \C P^1$, with the property that it is linearly expanding in time at spatial infinity.
There is a sequence of such approximate solutions that are closer and closer to being
solutions to the potential flow.
We estimate the deviation from being a solution.

We will want to glue the Eguchi-Hanson metric, on a neighborhood of $\C P^1 \subset T^* \C P^1$, to a
neighborhood of a singular point in the K\"ahler-Einstein orbifold. 
We know that under the K\"ahler-Ricci flow on the glued manifold, the area of the $\C P^1$ subvariety is
constant in time.  On the other hand,
under the K\"ahler-Ricci flow, the metric on the orbifold 
increases linearly in time.  This motivates an initial approximate flow on $T^* \C P^1$
given by the potential 
\begin{align} \label{3.1}
\phi_{EH}^{(0)}(t,z) = & 2(t \log t - t) + b^{-1} \phi_{EH, bt} \left( |z|^2 \right) \\
= & 2(t \log t - t) + \frac{1}{b} \sqrt{1 + b^2 t^2 |z|^4} + 
\frac{1}{2b}
\log 
\frac{
\sqrt{1 + b^2 t^2 |z|^4} - 1}{
\sqrt{1 + b^2 t^2 |z|^4} + 1}
 \notag
\end{align}
in a deleted neighborhood of the vertex in $\C^2/\Z_2$.
Here $b$ is a positive constant that determines the area of the $\C P^1$ subvariety; the area is
$2 \pi b^{-1}$.

The corresponding metric has Ricci flat time slices which are asymptotically flat, with a metric at infinity that is linearly increasing in $t$.
The area of the zero section $\C P^1 \subset T^* \C P^1$ is constant in $t$. The curvature of the metric is concentrated
in a region where $|z| \le \const t^{-\frac12}$. 

The $2(t \log t - t)$ term in (\ref{3.1}) is arranged so that the potential flow equation (\ref{2.3}) is
satisfied to leading order.
One finds that 
\begin{equation} \label{3.2}
\frac{\partial \phi_{EH}^{(0)}}{\partial t} - \log \det \left(  \partial_i \overline{\partial}_j \phi_{EH}^{(0)}
\right) = \frac{1}{bt} \sqrt{1 + b^2 t^2 |z|^4}.
\end{equation}
We will eventually want to consider (\ref{2.5}) when $|z| = O \left( t^{-a} \right)$, with $a \in 
\left( 0, \frac12 \right)$, in which case the
right-hand side of (\ref{3.2}) is $O \left( t^{-2a} \right)$. While this is decreasing in $t$, it is not decreasing
fast enough and we need a better approximate solution.

To this end, we first write down what (\ref{2.3}) becomes if we assume a $U(2)$-symmetry.
If $u(z,t) = F(\rho,t)$ with $\rho = |z|^2$ then the K\"ahler form
$\omega = \sqrt{-1} \partial \overline{\partial} u$
is associated to the metric
\begin{equation} \label{3.3}
(g_{i \overline{j}}) = \begin{pmatrix}
F_\rho + |z^1|^2 F_{\rho \rho} & z^1 \overline{z}^2 F_{\rho \rho} \\
z^2 \overline{z}^1 F_{\rho \rho} & F_\rho + |z^2|^2 F_{\rho \rho}
\end{pmatrix}
\end{equation}
and
(\ref{2.3}) becomes
\begin{equation} \label{3.4}
F_t = \log \left( F_\rho (F_\rho + \rho F_{\rho \rho}) \right).
\end{equation}
Next, we do a change of variable to bring the curvature concentration region to unit scale.  That is, we change
variables from
$(t, \rho)$ to $(s, \eta)$, where $s = t$ and $\eta = t \rho$.  After renaming $F$ to $G$, equation (\ref{3.4}) becomes
\begin{equation} \label{3.5}
G_s + \frac{1}{s} \eta G_\eta = \log \left( G_\eta (G_\eta + \eta G_{\eta \eta}) \right) + 2 \log s.
\end{equation}
The approximate solution 
\begin{align} \label{3.6}
G^{(0)}(s, \eta) = & 2 (s \log s - s) + b^{-1} \phi_{EH,b}(\eta) \\
= & 2(s \log s - s) + \frac{1}{b} \sqrt{1 + b^2 \eta^2} + 
\frac{1}{2b}
\log 
\frac{
\sqrt{1 + b^2 \eta^2} - 1}{
\sqrt{1 + b^2 \eta^2} + 1}
 \notag
\end{align}
satisfies
\begin{equation} \label{3.7}
G^{(0)}_s = \log \left( G^{(0)}_\eta (G^{(0)}_\eta + \eta G^{(0)}_{\eta \eta}) \right) + 2 \log s.
\end{equation}

We now write a formal solution 
\begin{equation} \label{3.8}   
G(s, \eta) = G^{(0)}(s, \eta) + \frac{1}{s} G^{(1)}(\eta) +
\frac{1}{s^2} G^{(2)}(\eta) + \ldots
\end{equation}
and substitute it into (\ref{3.5}) in order to find the terms $\{ G^{(j)} \}_{j=1}^\infty$ iteratively
by equating orders of $s$.
One gets equations of the form
\begin{equation} \label{3.9}
G^{(j)}_{\eta \eta} + \left( \frac{1}{\eta} + \frac{b^2 \eta}{1+b^2 \eta^2} \right) G^{(j)}_{\eta} = H^{(j)},
\end{equation}
where $H^{(j)}$ is a function of $\eta$ constructed from $\{G^{(1)}, \ldots, G^{(j-1)}\}$, which appear polynomially.
The relevant solution to (\ref{3.9}) is
\begin{equation} \label{3.10}
G^{(j)}(\eta) = \int_0^\eta \frac{1}{\sigma \sqrt{1+b^2 \sigma^2}} \int_0^\sigma  \tau \sqrt{1+b^2 \tau^2} \:
H^{(j)}(\tau) d\tau d\sigma.
\end{equation}
For example, $H^{(1)}(\eta) = 1$ and
\begin{equation} \label{3.11}
G^{(1)}(\eta) = \frac{1}{3b^2} \left[ \frac12 b^2 \eta^2 +
\log \frac{\sqrt{1+b^2 \eta^2} + 1}{2}
\right].
\end{equation}
\begin{lemma} \label{lem1}
$G^{(j)}(\eta) = \frac{1}{(j+1)3^j} \eta^{j+1} + O(\eta^j)$ as $\eta \rightarrow \infty$,
with similar asymptotics for the derivatives of $G^{(j)}$.
\end{lemma}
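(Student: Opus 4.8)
The plan is to prove the asymptotic claim by induction on $j$, extracting the leading behavior directly from the integral formula (\ref{3.10}) together with inductive control on the source terms $H^{(j)}$. The key structural fact is that, as $\eta \to \infty$, the weight $\tau \sqrt{1 + b^2 \tau^2}$ behaves like $b \tau^2$ and the weight $\frac{1}{\sigma \sqrt{1 + b^2 \sigma^2}}$ behaves like $\frac{1}{b \sigma^2}$, with corrections that are suppressed by powers of $\sigma^{-2}$. So the two nested integrations, applied to a function behaving like $\eta^p$, produce something behaving like a constant times $\eta^p$: the inner integral of $b\tau^2 \cdot \tau^p$ is $\frac{b}{p+3}\sigma^{p+3} + (\text{lower})$, and the outer integral of $\frac{1}{b\sigma^2} \cdot \frac{b}{p+3}\sigma^{p+3}$ is $\frac{1}{(p+3)(p+2)}\eta^{p+2} + (\text{lower})$. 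Thus the operator on the right side of (\ref{3.10}) sends $\eta^p$ (to leading order) to $\frac{1}{(p+2)(p+3)}\eta^{p+2}$, and more generally raises the degree by $2$ while multiplying the leading coefficient by an explicit rational factor; I should record this as a lemma about the linear operator $L^{-1}$ inverting the left-hand side of (\ref{3.9}).

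The induction then runs as follows. The base case $j=1$ is the explicit formula (\ref{3.11}): one checks from it that $G^{(1)}(\eta) = \frac{1}{6 b^2}\, b^2 \eta^2 + O(\log \eta) = \frac16 \eta^2 + O(\eta)$, matching $\frac{1}{(1+1)3^1}\eta^{1+1}$, and the stated derivative asymptotics follow by differentiating (\ref{3.11}) directly. For the inductive step, I first need to understand the degree of $H^{(j)}$. Since $H^{(j)}$ is built polynomially from $G^{(1)}, \dots, G^{(j-1)}$ (and their $\eta$-derivatives, which by induction have the expected one-lower-degree-per-derivative behavior), and since the expansion (\ref{3.8}) substituted into (\ref{3.5}) produces at order $s^{-j}$ terms that are products/compositions consistent with the scaling $G^{(i)} \sim \eta^{i+1}$, a bookkeeping argument shows $H^{(j)}(\eta) = c_j\, \eta^{j-1} + O(\eta^{j-2})$ for an explicit constant $c_j$. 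Applying $L^{-1}$ via the operator lemma above with $p = j-1$ then gives $G^{(j)}(\eta) = \frac{c_j}{(j+1)(j+2)}\eta^{j+1} + O(\eta^j)$, and it remains to check that the recursion for $c_j$ telescopes to $c_j = \frac{(j+2)}{3^j}$ (equivalently, that $\frac{c_j}{(j+1)(j+2)} = \frac{1}{(j+1)3^j}$), which should come out of carefully identifying which product of lower-order terms in (\ref{3.5}) contributes the top-degree piece of $H^{(j)}$ — plausibly the term coming from expanding $\log(G_\eta(G_\eta + \eta G_{\eta\eta}))$ around $G^{(0)}$ together with the transport term $\frac1s \eta G_\eta$. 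The derivative asymptotics propagate automatically because differentiating the integral formula (\ref{3.10}) once removes one $\sigma$-integration, so $G^{(j)}_\eta \sim \frac{1}{3^j}\eta^j$ and inductively each further derivative lowers the power by one.

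The main obstacle I anticipate is the combinatorial-algebraic step of pinning down the leading coefficient $c_j$ of $H^{(j)}$: one must argue that among all the polynomial combinations of $\{G^{(i)}\}_{i<j}$ appearing at order $s^{-j}$, only a specific few attain the maximal degree $j-1$, and then sum their contributions to get the clean recursion yielding the factor $3^{-j}$. This requires care because the logarithm in (\ref{3.5}) must be expanded as a formal series in $1/s$ whose coefficients are rational functions of $G^{(0)}_\eta, G^{(0)}_{\eta\eta}$ and the higher $G^{(i)}$'s, and one must track which of these rational expressions are $O(1)$ versus lower order as $\eta \to \infty$ (noting $G^{(0)}_\eta = \frac{1}{\eta}\sqrt{1+b^2\eta^2} \to b$). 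Everything else — the two-step integration estimates, the error-term propagation, the derivative bounds — is routine once the operator lemma is in place. A cleaner alternative that sidesteps the explicit constant is: prove by induction only that $G^{(j)} = \kappa_j \eta^{j+1} + O(\eta^j)$ for \emph{some} $\kappa_j > 0$ satisfying a recursion, solve the recursion at the end, and check it reproduces $\kappa_j = \frac{1}{(j+1)3^j}$ against the known $G^{(1)}$ (and, if desired, a direct computation of $G^{(2)}$ as a consistency check); I would present the proof in this order.
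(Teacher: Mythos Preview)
Your inductive strategy via the integral formula (\ref{3.10}) is sound and would eventually work, but the paper takes a much cleaner route that sidesteps exactly the obstacle you flag. Rather than tracking the leading coefficient $c_j$ of $H^{(j)}$ through a sum over partitions, the paper observes that if one replaces $G^{(0)}$ by its large-$\eta$ asymptotic $G^{(0)}_{asymp} = 2(s\log s - s) + \eta$ and runs the \emph{same} iteration (\ref{3.5})--(\ref{3.10}), the resulting $G^{(j)}_{asymp}$ are polynomials; and since the expanding complex hyperbolic flow $G = 2(s\log s-s) - 3s\log(1-\eta/3s)$ is an \emph{exact} solution of (\ref{3.5}) whose Taylor expansion in $s^{-1}$ begins with $G^{(0)}_{asymp}$, equating powers of $s$ immediately gives $G^{(j)}_{asymp}(\eta) = \frac{1}{(j+1)3^j}\eta^{j+1}$. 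The lemma then reduces to the easy inductive statement that $G^{(j)} - G^{(j)}_{asymp} = O(\eta^j)$, which follows from (\ref{3.10}) since the difference in sources $H^{(j)} - H^{(j)}_{asymp}$ is one order lower by induction. In effect, the exact hyperbolic solution computes your recursion for $c_j$ for free; your direct approach would recover the same answer but at the cost of the combinatorial bookkeeping you correctly identify as the bottleneck.
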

\begin{proof}
The large-$\eta$ asymptotics of $G^{(0)}$ are $G^{(0)}_{asymp}(\eta) = 2(s \log s - s) + \eta$.
We can do a similar iteration procedure as in (\ref{3.5})-(\ref{3.10}), starting with $G^{(0)}_{asymp}$, to obtain
$\{ G^{(j)}_{asymp} \}_{j=1}^\infty$. One finds that they are all polynomials in $\eta$.  To say what 
polynomials they are, we can use the fact that an exact solution
of (\ref{3.5}) is given by the expanding complex hyperbolic K\"ahler-Ricci flow
\begin{align} \label{3.12}
G & = 2(t \log t - t) - 3 t \log \left( 1 - \frac{1}{3} \rho \right) \\
& = 
2(s \log s - s)
- 3 s \log \left( 1 - \frac{\eta}{3s} \right) \notag \\ 
& = 2(s \log s - s) + 
\sum_{j=0}^\infty \frac{1}{(j+1)3^j} s^{-j} \eta^{j+1}. \notag
\end{align}
Equating terms with equal powers of $s$ shows that $G^{(j)}_{asymp}(\eta) = \frac{1}{(j+1)3^j} \eta^{j+1}$.
In view of the construction in (\ref{3.10}), inductively the difference between $G^{(j)}_{asymp}$ and $G^{(j)}$ will be of the
lower order $O(\eta^j)$. A similar argument works for the derivatives.
    \end{proof}

Put 
\begin{equation} \label{3.13}
    \widehat{G}^{(k)}(s, \eta) = G^{(0)}(s, \eta) + \sum_{j=1}^k s^{-j} G^{(j)}(\eta)
    \end{equation}
    and in view of (\ref{3.5}), put
\begin{equation} \label{3.14}
\widehat{F}^{(k)} = \widehat{G}^{(k)}_s +
\frac{1}{s} \eta \widehat{G}^{(k)}_\eta - \log \left( \widehat{G}^{(k)}_\eta 
(\widehat{G}^{(k)}_\eta + \eta \widehat{G}^{(k)}_{\eta \eta}) \right) - 2 \log s.
\end{equation}

The proof of the next lemma has input from ChatGPT-5.5.
\begin{lemma} \label{lem2}
Given $a \in \left(0, \frac12\right)$, if $s$ is large then
in the interval $0 < \frac{\eta}{s} \le s^{-2a}$,
the terms $\widehat{G}^{(k)}_\eta$ and $\widehat{G}^{(k)}_\eta + \eta \widehat{G}^{(k)}_{\eta \eta}$
are positive, and
there is a bound $|\widehat{F}^{(k)}| \le \const s^{-(k+1)} \left( 1 + \eta^{k+1} \right)$.
\end{lemma}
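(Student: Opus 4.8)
The plan is to derive both claims from a single factorization. Write $\widehat{G}^{(k)}_\eta = G^{(0)}_\eta(1+A)$ and $\widehat{G}^{(k)}_\eta + \eta \widehat{G}^{(k)}_{\eta\eta} = (G^{(0)}_\eta + \eta G^{(0)}_{\eta\eta})(1+B)$, where
\[
A = \sum_{j=1}^k s^{-j}\,\frac{G^{(j)}_\eta}{G^{(0)}_\eta}, \qquad
B = \sum_{j=1}^k s^{-j}\,\frac{G^{(j)}_\eta + \eta G^{(j)}_{\eta\eta}}{G^{(0)}_\eta + \eta G^{(0)}_{\eta\eta}} .
\]
Here $G^{(0)}_\eta = \tfrac{1}{b\eta}\sqrt{1+b^2\eta^2}$ and $G^{(0)}_\eta + \eta G^{(0)}_{\eta\eta} = \partial_\eta(\eta G^{(0)}_\eta) = \tfrac{b\eta}{\sqrt{1+b^2\eta^2}}$ have product identically $1$, which is exactly (\ref{3.7}). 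I would first show that on the region $0 \le \eta/s \le s^{-2a}$ one has $|A|, |B| < \tfrac12$ for $s$ large: for $A$ this follows from $G^{(0)}_\eta \ge 1$ and $|G^{(j)}_\eta| \le \const(1+\eta^j)$ (Lemma \ref{lem1} for large $\eta$, together with regularity of the $G^{(j)}$ near $\eta=0$), giving $|A| \le \const \sum_{j=1}^k (s^{-j}+s^{-2aj})$; for $B$ one uses in addition that $G^{(j)}_\eta + \eta G^{(j)}_{\eta\eta} = \partial_\eta(\eta G^{(j)}_\eta)$ vanishes at the same linear rate at $\eta=0$ as its denominator, so that the ratio is continuous on $[0,\infty)$ and $O(1+\eta^{j-1})$ at infinity. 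Since $G^{(0)}_\eta$ and, for $\eta>0$, $G^{(0)}_\eta + \eta G^{(0)}_{\eta\eta}$ are positive, this already yields the positivity assertion.

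For the error estimate, (\ref{3.7}) gives $G^{(0)}_s = 2\log s$, hence $\widehat{G}^{(k)}_s = 2\log s - \sum_{j=1}^k j\,s^{-j-1}G^{(j)}$, and therefore
\[
\widehat{F}^{(k)} = \Phi(s,\eta) - \log(1+A) - \log(1+B), \qquad
\Phi := \tfrac{1}{bs}\sqrt{1+b^2\eta^2} + \sum_{j=1}^k s^{-j-1}\bigl(\eta G^{(j)}_\eta - j\,G^{(j)}\bigr) ,
\]
using $\log\bigl(\widehat{G}^{(k)}_\eta(\widehat{G}^{(k)}_\eta + \eta\widehat{G}^{(k)}_{\eta\eta})\bigr) = \log(1+A)+\log(1+B)$. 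Next I would Taylor-expand $\log(1+A)$ and $\log(1+B)$ in $s^{-1}$ to order $s^{-k}$ (legitimate since $|A|,|B|<\tfrac12$). Their coefficients of $s^{-1},\dots,s^{-k}$ are universal polynomials in $\{G^{(i)}_\eta/G^{(0)}_\eta\}$, resp.\ $\{(G^{(i)}_\eta + \eta G^{(i)}_{\eta\eta})/(G^{(0)}_\eta + \eta G^{(0)}_{\eta\eta})\}$, and the defining relations of the $G^{(j)}$ — $H^{(1)}=1$ and (\ref{3.9}) for $j\ge2$, i.e.\ $\tfrac{G^{(j)}_\eta}{G^{(0)}_\eta} + \tfrac{G^{(j)}_\eta + \eta G^{(j)}_{\eta\eta}}{G^{(0)}_\eta + \eta G^{(0)}_{\eta\eta}} = \tfrac{\sqrt{1+b^2\eta^2}}{b}H^{(j)}$ with $H^{(j)}$ absorbing the lower-order products — are precisely the statement that these coefficients cancel the orders $s^{-1},\dots,s^{-k}$ of $\Phi$. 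Since $\Phi$ has orders exactly $s^{-1},\dots,s^{-(k+1)}$, what survives is
\[
\widehat{F}^{(k)} = s^{-(k+1)}\bigl(\eta G^{(k)}_\eta - k\,G^{(k)}\bigr) - r^A - r^B ,
\]
where $r^A, r^B$ are the two Taylor remainders, of order $s^{-(k+1)}$ and higher.

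It then remains to bound each of the three terms on the region by $\const\, s^{-(k+1)}(1+\eta^{k+1})$. For the first, Lemma \ref{lem1} gives $\eta G^{(k)}_\eta - k G^{(k)} = \tfrac{1}{(k+1)3^k}\eta^{k+1} + O(\eta^k)$. For $r^A$ I would split $\log(1+A)$ minus its order-$\le k$ Taylor part into the finitely many monomials $c\, s^{-J}\prod_{i=1}^m (G^{(j_i)}_\eta/G^{(0)}_\eta)$ with $m\le k$, each $j_i \le k$, and $J=\sum j_i \ge k+1$, together with the tail $\sum_{m>k}\tfrac{(-1)^{m-1}}{m}A^m$; then $|G^{(j)}_\eta/G^{(0)}_\eta| \le \const(1+\eta^j)$ bounds each monomial by $\const\,s^{-J}(1+\eta^J)$, and on the region (where $\eta/s \le s^{-2a}\le 1$) $s^{-J}\le s^{-(k+1)}$ and $s^{-J}\eta^J=(\eta/s)^J\le(\eta/s)^{k+1}$, so each is $\le \const\,s^{-(k+1)}(1+\eta^{k+1})$; while $\sum_{m>k}|A|^m \le 2|A|^{k+1}$ with $|A| \le \const(1+\eta)/s$ on the region gives the same. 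The argument for $r^B$ is identical. Combining, $|\widehat{F}^{(k)}| \le \const\, s^{-(k+1)}(1+\eta^{k+1})$.

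I expect the main obstacle to be the verification of the cancellation of orders $s^{-1},\dots,s^{-k}$: one has to match the bookkeeping of the formal expansion (\ref{3.8})--(\ref{3.10}) exactly against the Taylor expansions of the two logarithms, while simultaneously tracking the $\eta$-degree of every surviving monomial. The key mechanism that makes the final estimate work is that the constraint $\eta/s \le s^{-2a}$ converts formal powers of $s^{-1}$ with polynomially growing $\eta$-coefficients into genuine smallness, so that every leftover monomial $s^{-J}\eta^\ell$ — which always satisfies $\ell \le J$ and $J\ge k+1$ — is no larger than $s^{-(k+1)}\eta^{k+1}$ there.
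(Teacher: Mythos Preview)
Your argument is correct and complete as a plan; the only slip is cosmetic: the ratio $(G^{(j)}_\eta+\eta G^{(j)}_{\eta\eta})/(G^{(0)}_\eta+\eta G^{(0)}_{\eta\eta})$ is $O(1+\eta^{j})$ at infinity, not $O(1+\eta^{j-1})$, since the denominator tends to $1$. This is harmless for your bound on $|B|$.

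Your route is genuinely different from the paper's. You stay in the $(s,\eta)$ variables, expand the two logarithms $\log(1+A)$, $\log(1+B)$ as power series in $s^{-1}$, invoke the defining recursion (\ref{3.9}) to kill the coefficients of $s^{-1},\dots,s^{-k}$ in $\Phi-\log(1+A)-\log(1+B)$, and then estimate every surviving monomial $s^{-J}\eta^{\ell}$ (with $\ell\le J$, $J\ge k+1$) directly on the region $\eta/s\le s^{-2a}$. The paper instead changes variables to $(u,v)=(1/s,\eta/s)$, asserts that $\widehat{F}^{(k)}=K^{(k)}(u,v)$ with $K^{(k)}$ smooth near the origin, and then uses that $K^{(k)}(u,\alpha u)=O(u^{k+1})$ along every ray to conclude the Taylor expansion starts at total degree $k+1$, hence $|K^{(k)}|\le\const(u^{k+1}+v^{k+1})$. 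The paper's argument is shorter and more conceptual, but the smoothness of $K^{(k)}$ at $(0,0)$ is not literal (e.g.\ already for $k=0$ one has $K^{(0)}=\tfrac{1}{b}\sqrt{u^2+b^2v^2}$) and has to be read as a statement about homogeneous-type estimates. Your approach trades that slickness for explicitness: every remainder term is identified and bounded by hand, so nothing is swept under the rug. Both approaches exploit the same mechanism---the defining recursion forces order-by-order cancellation, and the region $\eta/s\le s^{-2a}$ turns formal smallness into actual smallness---but yours makes this mechanism visible at each step.
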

\begin{proof}
Put
$
A(\eta)=\sqrt{1+b^2\eta^2}$ and
$
x=\frac{1+\eta}{s}$.
On $0 <  \eta/s\le s^{-2a}$, as $s \rightarrow \infty$ we have
$x\le s^{-1}+s^{-2a} \rightarrow 0$.
All $O(x^m)$-terms below are uniform in this region.

In general, if we have an expansion
$Y=\sum_{j=1}^k s^{-j}Y_j(\eta)$ with
$|Y_j(\eta)|\le \const(j) (1+\eta)^j$
then
$\log(1+Y)
=
\sum_{\ell=1}^k s^{-\ell}L_\ell(\eta)+O(x^{k+1})$.
This is because $Y$ is $O(x)$, so applying Taylor's theorem
to the function $\log(1 + \cdot)$ gives a remainder
$O(x^{k+1})$, and each Taylor monomial of total $s^{-1}$-degree
$\ell$ in the expansion is $O(x^\ell)$.

Now
$
G^{(0)}_\eta=\frac{A}{b\eta}$ and
$
G^{(0)}_\eta+\eta G^{(0)}_{\eta\eta}=\frac{b\eta}{A}$.
Put $P_j=\frac{b\eta}{A}G^{(j)}_\eta$,
$
Q_j=\frac{A}{b\eta}
\left(G^{(j)}_\eta+\eta G^{(j)}_{\eta\eta}\right)$,
$Y_1=\sum_{j=1}^k s^{-j}P_j$ and
$
Y_2=\sum_{j=1}^k s^{-j}Q_j$.
Then
$
\widehat{G}^{(k)}_\eta
=
\frac{A}{b\eta}(1+Y_1)$ and
$
\widehat{G}^{(k)}_\eta+\eta\widehat{G}^{(k)}_{\eta\eta}
=
\frac{b\eta}{A}(1+Y_2)$.
Equation (\ref{3.10}) implies that as $\eta$ goes to zero,
$G^{(j)}_\eta=O(\eta)$ and
$
G^{(j)}_{\eta\eta}=O(1)$,
so
$
G^{(j)}_\eta+\eta G^{(j)}_{\eta\eta}=O(\eta)$.
It follows that $Q_j$
extends continuously to $\eta=0$.

By the derivative estimates of Lemma \ref{lem1}
through order $2$, we have
$
|P_j|+|Q_j|\le \const(j)(1+\eta)^j$.
Hence
$
|Y_1|+|Y_2|\le \const\sum_{j=1}^k x^j$, which is bounded above by
$\const x$ when $s$ is large.
For large $s$, both $1+Y_1$ and $1+Y_2$ are positive. Hence
$\widehat{G}^{(k)}_\eta>0$ and
$\widehat{G}^{(k)}_\eta+\eta\widehat{G}^{(k)}_{\eta\eta}>0$.

The leading determinant factors in $\widehat{F}^{(k)}$ cancel, and
\begin{equation}
\widehat{F}^{(k)}
=
\frac{A}{bs}
+
\sum_{j=1}^k s^{-j-1}
\left(\eta G^{(j)}_\eta-jG^{(j)}\right)
-
\log(1+Y_1)-\log(1+Y_2).
\end{equation}
The log terms have an expansion through order $s^{-k}$, with error $O(x^{k+1})$.
Also, the derivative estimates give
$
\frac{A}{bs}=O(x)$ and
$
s^{-j-1}\left(\eta G^{(j)}_\eta-jG^{(j)}\right)=O(x^{j+1})$. Hence
$\widehat{F}^{(k)}$ has the form
\begin{equation}
\widehat{F}^{(k)}
=
\sum_{\ell=1}^k s^{-\ell}C_\ell(\eta)+O(x^{k+1}).
\end{equation}

Here $C_\ell$ is the formal residue coefficient of
order $s^{-\ell}$, for $1\le \ell\le k$.  By construction, it vanishes.
Hence
$\widehat{F}^{(k)}=O(x^{k+1})$.
Finally,
$
x^{k+1}
=
s^{-(k+1)}(1+\eta)^{k+1}
\le
\const s^{-(k+1)}(1+\eta^{k+1})$,
which proves the estimate.
\end{proof}

We now scale back and put
$\phi_{EH}^{(k)}(t,z) = \widehat{G}^{(k)}(t, t |z|^2)$. 
Explicitly, if $k=1$ then
\begin{align} \label{3.18}
\phi_{EH}^{(1)}(t,z)
= & 2(t \log t - t) + \frac{1}{b} \sqrt{1 + b^2 t^2 |z|^4} + 
\frac{1}{2b}
\log \frac{
\sqrt{1 + b^2 t^2 |z|^4} - 1}{
\sqrt{1 + b^2 t^2 |z|^4} + 1}
+  \\
& \frac{1}{3b^2 t} \left[  \frac12 b^2 t^2 |z|^4 +
\log \frac{\sqrt{1+b^2 t^2 |z|^4} + 1}{2}
\right]. \notag
\end{align}

Lemma \ref{lem2} and (\ref{3.3}) imply that $\phi_{EH}^{(k)}(t,\cdot)$ is strictly plurisubharmonic for large $t$
in the region $|z| \le t^{-a}$.
Defining $\omega_{EH}^{(k)} = \sqrt{-1} \partial \overline{\partial} \phi_{EH}^{(k)}$ and
\begin{equation} \label{3.19}
f_{EH}^{(k)}(t,z) = \frac{\partial \phi_{EH}^{(k)}}{\partial t} - \log \det \left(  \partial_i \overline{\partial}_j \phi_{EH}^{(k)}
\right),
\end{equation}
we have
\begin{equation} \label{3.20}  
\sqrt{-1} \partial \overline{\partial} f_{EH}^{(k)} = \frac{d \omega_{EH}^{(k)}}{dt} + \Ric(\omega_{EH}^{(k)}).
\end{equation}

\begin{lemma} \label{lem3}
Given $a \in \left( 0, \frac12 \right)$, if $t$ is large then in the region $|z| \le t^{-a}$
the magnitudes of $f_{EH}^{(k)}$
 and $t \partial_t f_{EH}^{(k)}$
are bounded above by $\const \left( |z| + t^{- \frac12} \right)^{2(k+1)}$,
and the magnitude of $\nabla f_{EH}^{(k)}$ is bounded above by
$\const t^{- \frac12} \left( |z| + t^{- \frac12} \right)^{2k+1}$.
\end{lemma}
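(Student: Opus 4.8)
The plan is to translate the bounds of Lemma \ref{lem2} on $\widehat F^{(k)}(s,\eta)$ back into bounds on $f^{(k)}_{EH}(t,z)$ via the substitution $s=t$, $\eta = t|z|^2$, and then handle the $t$-derivative and the gradient by the same rescaling bookkeeping. First I would observe that comparing (\ref{3.19}) with (\ref{3.4})--(\ref{3.5}) and (\ref{3.14}), one has the identity $f^{(k)}_{EH}(t,z) = \widehat F^{(k)}(t, t|z|^2)$; this is just the chain rule applied to $\phi^{(k)}_{EH}(t,z) = \widehat G^{(k)}(t, t|z|^2)$, using that the $\det$ in (\ref{3.19}) is computed from (\ref{3.3}) with $F(\rho,t) = \widehat G^{(k)}(t,t\rho)$, so the extra $\frac1s\eta\widehat G^{(k)}_\eta$ and $2\log s$ terms in (\ref{3.14}) are exactly the ones produced by differentiating $\eta = t\rho$ and by the Jacobian of the coordinate change. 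Then Lemma \ref{lem2} gives $|f^{(k)}_{EH}(t,z)| \le \const\, t^{-(k+1)}(1 + (t|z|^2)^{k+1}) = \const\, t^{-(k+1)}(1 + t^{k+1}|z|^{2(k+1)}) \le \const\, (t^{-1} + |z|^2)^{k+1} = \const\,(|z| + t^{-\frac12})^{2(k+1)}$, valid in $|z| \le t^{-a}$ since there $\eta/s = |z|^2 \le t^{-2a} = s^{-2a}$, which is precisely the range in Lemma \ref{lem2}.

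For the two derivative bounds I would work with the representation $\widehat F^{(k)}(s,\eta) = K^{(k)}(u,v)$, $u = 1/s$, $v = \eta/s = |z|^2$, from the proof of Lemma \ref{lem2}, where $K^{(k)}$ is smooth near $(0,0)$ and vanishes to order $k+1$ along every ray $v = \alpha u$; equivalently $K^{(k)}(u,v) = \sum_{m\ge k+1} \sum_{i+j=m} c_{ij}\,u^i v^j$ with the Taylor series convergent near the origin. For $t\partial_t f^{(k)}_{EH}$: with $z$ fixed, $v = |z|^2$ is constant and $u = 1/t$, so $t\partial_t f^{(k)}_{EH} = t\partial_t K^{(k)}(1/t, |z|^2) = -u\, \partial_u K^{(k)}(u,v)$; since $\partial_u K^{(k)}$ still vanishes to order $k$ along rays, $u\,\partial_u K^{(k)}$ vanishes to order $k+1$, and the same homogeneity estimate as in Lemma \ref{lem2} gives $|t\partial_t f^{(k)}_{EH}| \le \const(u^{k+1} + v^{k+1}) = \const(t^{-1} + |z|^2)^{k+1}$, i.e.\ $\le \const(|z| + t^{-\frac12})^{2(k+1)}$. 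For the spatial gradient: $\nabla f^{(k)}_{EH}$ is computed with respect to the metric $\omega^{(k)}_{EH}$, which by (\ref{3.3}) and Lemma \ref{lem2} is uniformly comparable on $|z|\le t^{-a}$ to the Euclidean metric rescaled so that $|z|\sim t^{-\frac12}$ is unit scale — more precisely $g^{(k)}_{i\bar j} \sim \frac{\sqrt{1+b^2 t^2|z|^4}}{t|z|^2}(\text{angular}) + \frac{b^2 t|z|^2}{\sqrt{1+b^2t^2|z|^4}}(\text{radial})$, both eigenvalues being of size between $\const\,t^{-1}|z|^{-2}$ and $\const\, t$ in a way that makes $|df|_{g} \le \const\, |z|\, |\partial_z(\text{Euclidean})f|$ in the relevant range; since $f^{(k)}_{EH}$ as a function of $z$ depends on $|z|$ through $v = |z|^2$ (plus the harmless isolated $\log(t|z|^2)$-type term absorbed into $K^{(k)}$), $\partial_{z^i}$ brings down a factor $\bar z^i \partial_v K^{(k)}$, and $\partial_v K^{(k)}$ vanishes to order $k$ along rays, giving a bound $\const\, |z|\,(u^k + v^k) = \const\,|z|(t^{-1} + |z|^2)^k$. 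Multiplying by the $|z|^{-1}\cdot$ scale from the inverse metric's square root and re-collecting, $|\nabla f^{(k)}_{EH}|_g \le \const\, t^{-\frac12}(|z| + t^{-\frac12})^{2k+1}$, as claimed.

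The main obstacle, and the step to which I would devote the most care, is the gradient bound: it is the only one that genuinely mixes the $K^{(k)}$ Taylor-series estimate with the anisotropic geometry of the near-Eguchi-Hanson metric (\ref{3.3}), so one must check that the radial and angular eigenvalues of $g^{(k)}_{i\bar j}$ on $|z|\le t^{-a}$ are controlled uniformly — which follows from (\ref{3.15})--(\ref{3.16}) and Lemma \ref{lem1}, the corrections $\sum s^{-j}G^{(j)}$ being lower order — and that converting $\partial_{z}$ into $\nabla_g$ costs exactly the factor $t^{-\frac12}(\cdot)^{-1}$ relative to one more power of $(|z|+t^{-\frac12})$, producing the stated exponent $2k+1$ rather than $2k+2$. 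The other two estimates are essentially immediate corollaries of Lemma \ref{lem2} once the identity $f^{(k)}_{EH}(t,z) = \widehat F^{(k)}(t,t|z|^2)$ and the $(u,v)$-homogeneity are in hand.
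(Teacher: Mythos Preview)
Your strategy matches the paper's: translate Lemma~\ref{lem2} back via $s=t$, $\eta=t|z|^2$, then differentiate the same structure.  Your treatment of $t\partial_t f_{EH}^{(k)}$ via $t\partial_t = -u\,\partial_u$ (with $v=|z|^2$ held fixed) is in fact a slight streamlining of the paper's version, which works in $(s,\eta)$ and produces two terms $s^{-1}\eta\,\partial_\eta\widehat F^{(k)}+\partial_s\widehat F^{(k)}$ to estimate separately.  (One remark: you do not need the Taylor series of $K^{(k)}$ to converge; smoothness and the vanishing of the degree-$\le k$ Taylor polynomial suffice.)

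The gradient paragraph, however, does not hold together as written.  The eigenvalue range you quote for $g^{(k)}_{i\bar j}$ is wrong (for small $\eta$ the radial eigenvalue is $\sim t^2|z|^2$, not $\sim t^{-1}|z|^{-2}$), the two conversion factors you give, first ``$|df|_g\le\const\,|z|\,|\partial_z f|$'' and then ``multiplying by the $|z|^{-1}$ scale'', are mutually inconsistent, and neither produces $t^{-1/2}(|z|+t^{-1/2})^{2k+1}$ from $|z|(u^k+v^k)$.  The clean way, and the way the paper does it, is to observe that $\partial_{\bar z^j}f = z_j\,\partial_vK^{(k)}$ is purely radial, so only the \emph{radial} contraction of the inverse metric enters, and that is given explicitly by (\ref{2.8}): one gets $g_{EH}^{(0),\bar j i}z_{\bar j}z_i = \frac{\sqrt{1+b^2\eta^2}}{bs^2}$ as in (\ref{3.22}), whence
\[
|\nabla f_{EH}^{(k)}|_g^2 \;=\; \frac{\sqrt{1+b^2\eta^2}}{bs^2}\,(\partial_vK^{(k)})^2
\;\le\; \const\, u(u+v)\cdot (u+v)^{2k},
\]
using $\frac{\sqrt{1+b^2\eta^2}}{bs^2}=\frac{u\sqrt{u^2+b^2v^2}}{b}\le\const\,u(u+v)$ and $|\partial_vK^{(k)}|\le\const(u+v)^k$.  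This yields $|\nabla f_{EH}^{(k)}|_g\le\const\,u^{1/2}(u+v)^{k+1/2}=\const\,t^{-1/2}(|z|+t^{-1/2})^{2k+1}$.  So the idea is right but the execution should go through the explicit formula for the radial inverse metric rather than an eigenvalue-range heuristic.
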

\begin{proof}
The bound on $f_{EH}^{(k)}$ follows from Lemma \ref{lem2}.
In this region, $\omega_{EH}^{(k)}$ is uniformly biLipschitz equivalent to $\omega_{EH}^{(0)}$, so 
to bound $|\nabla f_{EH}^{(k)}|$ it is enough to estimate
the sum $g_{EH}^{(0),\overline{j} i} \partial_{z^i} f_{EH}^{(k)} \partial_{\overline{z}^j} f_{EH}^{(k)}$.
From the chain rule, 
\begin{equation} \label{3.21}
\frac{\partial f_{EH}^{(k)}}{\partial \overline{z}^j} = 
\frac{\partial \eta}{\partial \overline{z}^j} \frac{\partial \widehat{F}^{(k)}}{\partial \eta} =
t z_j \frac{\partial \widehat{F}^{(k)}}{\partial \eta} = s z_j \frac{\partial \widehat{F}^{(k)}}{\partial \eta}.
\end{equation}
From (\ref{2.8}), we have
\begin{equation} \label{3.22}
g_{EH}^{(0),\overline{j} i} z_{\overline{j}} z_i = \frac{\sqrt{1+b^2 \eta^2}}{b s^2},
\end{equation}
so
\begin{equation} \label{3.23}
|\nabla  f_{EH}^{(k)} |^2 = \frac{\sqrt{1+b^2 \eta^2}}{b}
\left( \frac{\partial \widehat{F}^{(k)}}{\partial \eta} \right)^2.
\end{equation}
By the argument in the proof of Lemma \ref{lem2}, in the given region we have
$| \frac{\partial \widehat{F}^{(k)}}{\partial \eta} | \le \const s^{-(k+1)} (1 + \eta^k)$,
from which the bound on $|\nabla  f_{EH}^{(k)} |$ follows.

Next, by the chain rule,
\begin{equation} \label{3.24}
\frac{\partial f_{EH}^{(k)}}{\partial t} = 
\frac{\partial \eta}{\partial t} \frac{\partial \widehat{F}^{(k)}}{\partial \eta} +
\frac{\partial s}{\partial t} \frac{\partial \widehat{F}^{(k)}}{\partial s} =
|z|^2 \frac{\partial \widehat{F}^{(k)}}{\partial \eta} +
\frac{\partial \widehat{F}^{(k)}}{\partial s} = s^{-1} \eta \frac{\partial \widehat{F}^{(k)}}{\partial \eta} +
\frac{\partial \widehat{F}^{(k)}}{\partial s}.
\end{equation}
Estimating $|\frac{\partial \widehat{F}^{(k)}}{\partial s}|$ similarly and
applying the previous estimate on $| \frac{\partial \widehat{F}^{(k)}}{\partial \eta} |$ gives the bound on 
$t |\frac{\partial f_{EH}^{(k)}}{\partial t}|$.
\end{proof}

\section{Model flow} \label{sect4}

In this section we construct the model K\"ahler potential $\phi_{mod}(t)$ on a manifold $M$
as in the statement of Theorem \ref{thm1}. The potential is obtained by gluing the potential
$\phi_{EH}^{(1)}(t)$ from Section \ref{sect3} to a K\"ahler potential $\phi_X(t)$ for the expanding flow on the
orbifold $X$, in a neighborhood of each singular point of $X$. The only ambiguities in the
construction of $\phi_{mod}(t)$ are the choice of a bump function $\sigma$ and the number
$a \in \left( 0, \frac12 \right)$ that determines the scale $|z| \sim t^{-a}$ at which we do the gluing.
To get the best estimates for Theorem \ref{thm1}, we will take $a$ to be $\frac13$.

We will estimate the deviation of $\phi_{mod}(t)$ from satisfying the potential flow equation.
The individual terms $\phi_{EH}^{(1)}(t)$ and $\phi_X(t)$ satisfy good estimates for the deviation.
To leading order, $\phi_{EH}^{(1)}(t)$ and $\phi_X(t)$ agree on the gluing region.
However, the discrepancy between them introduces some errors to the result of gluing, in terms of
satisfying the potential flow equation,
that need to be controlled.  There are two main sources of error.  One source is a lower order term in
$\phi_{EH}^{(1)}(t)$ that doesn't appear in $\phi_X(t)$. The other source is the difference between
the order-$t$ terms in $\phi_{EH}^{(1)}(t)$ and $\phi_X(t)$. Fortunately, both of these error terms
are harmonic to leading order.

We now start the construction.
Let $X$ be a compact K\"ahler orbifold of complex dimension two that admits a K\"ahler-Einstein orbifold metric
$\omega_{KE}$ satisfying $\Ric \left( \omega_{KE}\right) = - \omega_{KE}$. We assume that $X$ has isolated
singular points with isotropy group $\Z_2$. 

\begin{example} \cite[Section 10]{Song-Weinkove (2014)}
Let $Z_1$ and $Z_2$ be hyperelliptic Riemann surfaces of genus at least two, with involutions $i_1$ and $i_2$, respectively. Let $\omega_{Z_i}$ be a constant curvature metric on $Z_i$, normalized so that
$\Ric \left( \omega_{Z_i}\right) = - \omega_{Z_i}$. Let $p_i : Z_1 \times Z_2 \rightarrow Z_i$ be the
projection map.  Then $\omega_{Z_1 \times Z_2} = p_1^* \omega_{Z_1} + p_2^*\omega_{Z_2}$ is a 
K\"ahler-Einstein metric on
$Z_1 \times Z_2$, with $\Ric \left( \omega_{Z_1 \times Z_2} \right) = - \omega_{Z_1 \times Z_2}$. Put
$X = (Z_1 \times Z_2)/\Z_2$, the quotient by the diagonal $\Z_2$-action.  Then $X$ is a
K\"ahler-Einstein orbifold with $\Ric \left( 
\omega_{KE} \right) = - \omega_{KE}$, having isolated singularities
with $\Z_2$-isotropy groups.
\end{example}

There is an orbifold K\"ahler-Ricci flow $\omega_X$ on $X$ with $\omega_X(t) = t \omega_{KE}$. 

Given a singular point
$x \in X$, some neighborhood $U_x$ of $x$ is analytically equivalent to $B(0, \delta)/\Z_2$, for some ball $B(0, \delta) \subset\C^2$.
Letting $p : B(0, \delta) \rightarrow U_x$ be the quotient map, $p^* \omega_{KE}$ is a smooth K\"ahler-Einstein metric
on $B(0, \delta)$. 
Let $\phi_{KE}$ be a potential for $p^* \omega_{KE}$ in Bochner coordinates. 
This means
that $p^* \omega_{KE} = \sqrt{-1} \partial \overline{\partial} \phi_{KE}$
and there is an expansion $\phi_{KE}(z) = |z|^2 + \sum_{|J|,|K| \ge 2} c_{J \overline{K}}
z^J \overline{z}^K$, where $J$ and $K$ are multi-indices; cf.
\cite[p. 50]{Arezzo-Loi (2004)}.  

In terms of the potential $\phi_{KE}$, the K\"ahler-Einstein condition
becomes
$\log \frac{(p^* \omega_{KE})^n}{\Omega} = \phi_{KE} + F$, where $F$ is a real pluriharmonic
function. We can assume that $F = h + \overline{h}$ for a holomorphic function $h$.
To be compatible with the expansion of $\phi_{KE}$ in Bochner coordinates, one sees that
$h$ must be
an imaginary constant, so $F = 0$. 

Writing out the expansion of $\phi_{KE}$, it takes the form
$\phi_{KE}(z) = |z|^2 + 
C_{a \overline{b} c \overline{d}} z^a \overline{z}^b z^c \overline{z}^d + O(|z|^6)$, 
(A priori there is an $O(|z|^5)$ term, but $\phi_{KE}$ is invariant under the $\Z_2$-symmetry
$z \rightarrow -z$.)
Here $C_{a \overline{b} c \overline{d}}$ is proportional to the curvature tensor at $0$ and has the
symmetries $C_{a \overline{b} c \overline{d}} = 
C_{c \overline{b} a \overline{d}} =
C_{a \overline{d} c \overline{b}} =
C_{c \overline{d} a \overline{b}}$.
The K\"ahler-Einstein condition implies that $\sum_{a=1}^2 C_{a \overline{a} c \overline{d}} = \frac14 \delta_{cd}$.

We will implicitly use the same notation when
the K\"ahler potential is descended from $B(0,\delta) \subset \C^2$ to $U_x$. A potential for the orbifold K\"ahler-Ricci flow on $B(0, \delta)$ is
$\phi_X(t) = 2(t \log t - t) + t \phi_{KE}$.

For simplicity, we assume hereafter that $X$ only has one singular point $x$.
It is straightforward to extend to the case of more singular points.
Let $\sigma : [0, \infty) \rightarrow [0,1]$ be a smooth nonincreasing function so that
$\sigma \Big|_{[0, \frac12]} = 1$ and $\sigma \Big|_{[1, \infty)} = 0$.
Given $a \in (0, \frac12)$ and $|z| < \delta$, define the model potential by
\begin{equation} \label{4.1}
\phi_{mod}(t,z) =  \sigma \left( t^{a} |z| \right) \phi_{EH}^{(1)}(t,z) + 
\left( 1 - \sigma \left( t^{a} |z| \right) \right)
\phi_{X}(t,z).
\end{equation}
Put
\begin{equation} \label{4.2}
\omega_{mod}(t,z) =
\sqrt{-1} \partial \overline{\partial} \phi_{mod}(t,z) \text{ if } |z| < \delta.
\end{equation}
We extend it to the rest of $X$ as $\omega_X(t)$. We obtain a model flow 
on $M$, the
complex manifold that is the result of gluing a truncated copy of $T^\star \C P^1$ at the orbifold
point of $X$.

Putting
\begin{equation} \label{4.3}
f_{mod}(t,z) = 
\frac{\partial \phi_{mod}}{\partial t} - \log \det \left(  \partial_i \overline{\partial}_j \phi_{mod}
\right) \text{if } |z| < \delta
\end{equation}
and extending it by zero to $M$,
we have
\begin{equation} \label{4.4}  
\sqrt{-1} \partial \overline{\partial} f_{mod} = \frac{d \omega_{mod}}{dt} + \Ric(\omega_{mod}).
\end{equation}

In the rest of this section, we take $a = \frac13$. For brevity, if a point is in 
$M - U_{x}$ then we set $|z|$ to be $\delta$ at that point.

\begin{lemma} \label{lem4}
Given $\alpha \in (0, \frac12]$,
for large $t$ we have uniform bounds 
\begin{enumerate}
\item $t^{4/3} \left( |z| + t^{- \frac12} \right)^2 |f_{mod}(t,z)| \le \const$, and
\item 
$t^{4/3} \left( r + t^{- \frac12} \right)^2
\left( 1 + t^{\frac12} r \right)^{2 \alpha}
\frac{|f_{mod}(m,t) - f_{mod}(m^\prime, t^\prime)|}{\left( d_t^2(m,m^\prime) + |t-t^\prime| \right)^\alpha}
\le \const$
\end{enumerate}
whenever $(m^\prime, t^\prime)$
satisfies
$|z|, |z^\prime| \in \left[ \frac12 r, r + t^{-\frac12} \right]$ and
$t \le t^\prime \le t + \left( 1 + t^{\frac12} r \right)^2$
for some $r \in (0, 2\delta]$.
\end{lemma}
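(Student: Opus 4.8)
The plan is to decompose $f_{mod}$ according to the three regions determined by the bump function $\sigma(t^{1/4}|z|)$: the inner region $|z| \le \frac12 t^{-1/4}$ where $\phi_{mod} = \phi_{EH}^{(1)}$, the outer region $|z| \ge t^{-1/4}$ where $\phi_{mod} = \phi_X$, and the transition region $\frac12 t^{-1/4} \le |z| \le t^{-1/4}$. In the inner region, $f_{mod} = f_{EH}^{(1)}$ and Lemma \ref{lem3} with $k=1$ gives $|f_{EH}^{(1)}| \le \const (|z| + t^{-1/2})^4$; since $|z| \le t^{-1/4}$ we have $(|z|+t^{-1/2})^4 \le \const t^{-3/2}(|z|+t^{-1/2})^2$, which is exactly bound (1). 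In the outer region, $f_{mod} = f_X = \frac{\partial \phi_X}{\partial t} - \log\det(\partial_i\overline\partial_j \phi_X)$; using $\phi_X(t) = 2(t\log t - t) + t\phi_{KE}$ and the K\"ahler-Einstein equation $\log\det(\partial_i\overline\partial_j\phi_{KE}) = \phi_{KE}$ (in the chosen coordinates, after accounting for $\Omega$), one computes that $f_X \equiv 0$ identically, so bound (1) is trivial there. The content is therefore concentrated in the transition region.

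In the transition region, write $\phi_{mod} = \phi_X + \sigma(t^{1/4}|z|)\,\psi$ where $\psi = \phi_{EH}^{(1)} - \phi_X$. The first step is to estimate $\psi$ and its derivatives. Comparing \eqref{3.18} with $\phi_X = 2(t\log t - t) + t\phi_{KE}$ and using the large-$\eta$ asymptotics from Lemma \ref{lem1}: the order-$t$ part of $\phi_{EH}^{(1)}$, namely $\frac1b\sqrt{1+b^2t^2|z|^4} \approx t|z|^2$, must be compared with $t\phi_{KE}(z) = t|z|^2 + t\,C_{a\overline b c\overline d}z^a\overline z^b z^c\overline z^d + O(t|z|^6)$. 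In the transition region $|z| \sim t^{-1/4}$, so $t|z|^2 \sim t^{1/2}$, the quartic correction $t|z|^4 \sim 1$, and the $O(t^{-1}\log(t|z|^2)) = O(t^{-1}\log\sqrt t)$ term from $G^{(1)}$ is $O(t^{-1+\epsilon})$. The key structural fact the author flags — that both main error terms are \emph{harmonic to leading order} — is what makes these survive: the quartic discrepancy $t|z|^4$-type term and the $G^{(1)}$ logarithm each have vanishing (or lower-order) Laplacian, so when one expands $\log\det(\partial_i\overline\partial_j\phi_{mod})$ around $\log\det(\partial_i\overline\partial_j\phi_X)$, the leading term $\Delta_{g_X}(\sigma\psi)$ — which a priori would be of size $t^{1/4}\cdot t^{1/4}\cdot(\text{size of }\psi)$ from the two derivatives falling on $\sigma$ — is suppressed. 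One carefully tracks where derivatives land: when both derivatives hit $\sigma$, the factor $\psi$ is evaluated and turns out small; when derivatives hit $\psi$, the harmonicity kills the leading contribution; cross terms are handled by the product rule. The upshot should be $|f_{mod}| \le \const\, t^{-3/2}$ in the transition region, matching $(|z|+t^{-1/2})^2 \sim t^{-1/2}$ so that $t^{3/2}(|z|+t^{-1/2})^2|f_{mod}| \le \const t^{-1/2+3/2-1} \cdot \ldots$; one checks the exponents balance to give bound (1).

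For bound (2), the strategy is the same region-by-region analysis but now for H\"older differences. The parabolic scaling is built into the statement: at spatial scale $r$ one allows a time interval of length $(1+t^{1/2}r)^2$, which is the natural parabolic time associated to the elliptic operator $\sqrt{-1}\partial\overline\partial$ in the rescaled coordinates $\eta = t|z|^2$. The plan is to establish Lipschitz-type (hence H\"older) bounds on $f_{mod}$, $\nabla f_{mod}$, and $\partial_t f_{mod}$ via Lemma \ref{lem3} (which already gives $|\nabla f_{EH}^{(k)}| \le \const t^{-1/2}(|z|+t^{-1/2})^{2k+1}$ and the bound on $t\partial_t f_{EH}^{(k)}$) in the inner region, triviality in the outer region, and the explicit transition-region computation. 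Converting a Lipschitz bound on a ball of parabolic radius $\rho = (1 + t^{1/2}r)$ (in rescaled units) to a H\"older-$\alpha$ bound costs a factor $\rho^{1-\alpha}$, and the weight $(1+t^{1/2}r)^{2\alpha}$ in the statement together with the $(r+t^{-1/2})^2$ weight absorbs exactly this; one must check that the Lipschitz constants of $f_{mod}$ with respect to the rescaled (unit-scale) geometry decay at the right rate. The main obstacle is the transition-region estimate: one must expand $\log\det$ to sufficient order, organize the terms by how many derivatives fall on the bump function versus on $\psi$, and exploit the approximate harmonicity of the two dangerous pieces of $\psi$ (the quartic curvature discrepancy and the $G^{(1)}$ log term) to gain the needed powers of $t$ — this is a somewhat delicate bookkeeping task, as the naive count of derivatives on $\sigma$ loses two powers of $t^{1/4}$, precisely the margin that harmonicity must recover, and the choice $a = \frac14$ is what makes this margin exactly sufficient.
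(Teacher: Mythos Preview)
Your plan is essentially the paper's own argument: the same three-region decomposition, the same appeal to Lemma~\ref{lem3} for $|z|\le\frac12 t^{-1/4}$, the vanishing of $f_{mod}$ for $|z|\ge t^{-1/4}$, and the same use of the harmonicity of the leading discrepancy terms in the transition annulus. Two differences are worth noting. First, you expand $\phi_{mod}=\phi_X+\sigma(\phi_{EH}^{(1)}-\phi_X)$ and linearize around $\omega_X$, whereas the paper writes $\phi_{mod}=\phi_{EH}^{(1)}+(1-\sigma)(\phi_X-\phi_{EH}^{(1)})$ and linearizes around $\omega_{EH}^{(1)}$; since both reference metrics are asymptotically $t$ times flat in this annulus, either choice works. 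Second, for part~(2) the paper does not reprove derivative bounds from scratch but simply uses the interpolation
\[
\frac{|f(m,t)-f(m',t')|}{(d_t^2+|t-t'|)^\alpha}\le (2\max|f|)^{1-2\alpha}\Bigl(\tfrac{|f(m,t)-f(m',t')|}{\sqrt{d_t^2+|t-t'|}}\Bigr)^{2\alpha},
\]
which converts the $C^0$ bound from part~(1) together with the gradient and $\partial_t$ bounds of Lemma~\ref{lem3} directly into the weighted H\"older bound; this is cleaner than your sketch.

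There are, however, concrete inaccuracies in your transition-region analysis. Your list of the terms in $\psi=\phi_{EH}^{(1)}-\phi_X$ omits the subleading piece $-\tfrac{1}{2b^2t|z|^2}$ of $G^{(0)}$ (see (4.7) and (4.10)); this term is $O(t^{-1/2})$ in the annulus and is precisely the second ``harmonic'' term the paper singles out, using $\partial_i\overline\partial_i(1/|z|^2)=0$. The logarithm from $G^{(1)}$ is \emph{not} harmonic---its flat Laplacian is $\tfrac{1}{3b^2t|z|^2}$---it is merely lower order because of its $t^{-1}$ prefactor. Finally, your claimed rate $|f_{mod}|\le\mathrm{const}\,t^{-3/2}$ in the annulus is too optimistic and the exponent arithmetic that follows is garbled: the paper obtains (and only needs) $|f_{mod}|=O(t^{-1})$ there, which with $(|z|+t^{-1/2})^2\sim t^{-1/2}$ gives exactly the bound $t^{3/2}\cdot t^{-1/2}\cdot t^{-1}=O(1)$ in~(1). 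The derivatives of $\sigma(t^{1/4}|z|)$ cost $t^{1/4}$ each, so once one feeds the correct sizes of all pieces of $\psi$ through the product rule and uses the two harmonicity facts, the outcome is $O(t^{-1})$, not $O(t^{-3/2})$.
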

\begin{proof}
When $|z| \le \frac12 t^{-a}$, the claim of part (1) follows from Lemma \ref{lem3}.
Since $f_{mod}(t,z)$ vanishes when
$|z| \ge t^{-a}$, 
the only region to check for part (1) is 
when $\frac12 t^{-a} < |z| < t^{-a}$.

In this region, we write
\begin{equation} \label{4.5}
\phi_{mod}(t,z) = 
\phi_{EH}^{(1)}(t,z) + 
 \left( 1 - \sigma \left( t^{a} |z| \right) \right)
\left( \phi_X(t,z) - \phi_{EH}^{(1)}(t,z) \right).
\end{equation}
Then
\begin{align} \label{4.6}
f_{mod} = & f_{EH}^{(1)} + \frac{\partial}{\partial t} \left( (1-\sigma) \left( \phi_X(t,z)
- \phi_{EH}^{(1)}(t,z) \right)\right)
- \\
& \Tr \log \left( I + \left( \omega^{(1)}_{EH} \right)^{-1} \sqrt{-1} \partial \overline{\partial}
\left( (1-\sigma) \left( \phi_X(t,z)  - \phi_{EH}^{(1)}(t,z) \right)\right)
\right), \notag
\end{align}
where the argument of $\sigma$ is $t^{a} |z|$.

In this region,
$\omega^{(1)}_{EH}$ is asymptotically $t \sqrt{-1}dz^i \wedge d\overline{z}^i$.
To leading order, $G^{(0)}$ is asymptotic to
\begin{equation} \label{4.7}
2(s \log s - s) + \eta - \frac{1}{2 b^2 \eta} = 2(t \log t - t) + t|z|^2 - \frac{1}{2 b^2 t |z|^2}.
\end{equation}
Also, $G^{(1)}$ is asymptotic to
\begin{equation} \label{4.8}
\frac{1}{3b^2} \left( \frac12 b^2 \eta^2 + \log \eta \right) =
\frac{1}{3b^2} \left( \frac12 b^2 t^2 |z|^4 + \log (t |z|^2) \right).
\end{equation}
Along with (\ref{3.13}),
it follows that
$\phi_{EH}^{(1)}$ is asymptotic to 
\begin{equation} \label{4.9}
2(t \log t - t) + t|z|^2 - \frac{1}{2 b^2 t |z|^2} + \frac16 t |z|^4 +
\frac{1}{3b^2 t}  \log (t |z|^2).
\end{equation}
Hence $\phi_X  - \phi_{EH}^{(1)}$ is asymptotic to
\begin{equation} \label{4.10}
t C_{a \overline{b} c \overline{d}} z^a \overline{z}^b z^c \overline{z}^d - \frac16 t |z|^4
+ \frac{1}{2 b^2 t |z|^2} - \frac{1}{3b^2 t}  \log (t |z|^2).
\end{equation}

Since $a = \frac13$, one can check from (\ref{4.6}) that $|f_{mod}(t,z)|$ is $O \left( t^{- \: \frac23} \right)$
in this region.
Here we use the fact that
$\sum_i \partial_i \overline{\partial}_i 
\left( C_{a \overline{b} c \overline{d}} z^a \overline{z}^b z^c \overline{z}^d - \frac16  |z|^4 \right) = 0$,
since $\omega_{KE}$ is a K\"ahler-Einstein metric. We also use the fact that 
$\sum_i \partial_i \overline{\partial}_i
\frac{1}{|z|^2} = 0$. 

Hence we have the bound 
$t^{4/3} \left( |z| + t^{- \frac12} \right)^2 |f_{mod}(t,z)| \le \const$ in this region. 

A similar argument
applies to part (2), where we
use the inequality
\begin{align} \label{4.11}
& \frac{|f_{mod}(m,t) - f_{mod}(m^\prime, t^\prime)|}{\left( d_t^2(m,m^\prime) + |t-t^\prime| \right)^\alpha} = \\
& |f_{mod}(m,t) - f_{mod}(m^\prime, t^\prime)|^{1-2\alpha} 
\left( \frac{|f_{mod}(m,t) - f_{mod}(m^\prime, t^\prime)|}{\sqrt{d_t^2(m,m^\prime) + |t-t^\prime|}}
\right)^{2\alpha} \le \notag \\
& (2\max(|f_{mod}(m,t)|, |f_{mod}(m^\prime, t^\prime)|))^{1-2\alpha} 
\left( \frac{|f_{mod}(m,t) - f_{mod}(m^\prime, t^\prime)|}{\sqrt{d_t^2(m,m^\prime) + |t-t^\prime|}}
\right)^{2\alpha}, \notag
\end{align}
along with the derivative estimates in Lemma \ref{lem3}.
\end{proof}

\section{Proof of Theorem \ref{thm1}} \label{sect5}

In this section we use the analytic setup of \cite{Brendle-Kapouleas (2017)}
to prove Theorem \ref{thm1}.  We use a fixed point theorem to show the existence of
a K\"ahler-Ricci flow that is $K(t)$-biLipschitz close to the model flow, where
$K(t) = 1 + O \left( t^{- \frac13 + \epsilon} \right)$ as $t \rightarrow \infty$.
Since many of the details are as in \cite{Brendle-Kapouleas (2017)}, we just
outline the main steps.
A subsequent parabolic Schauder estimate improves this 
to $K(t) = 1 + O \left( t^{- \: \frac23 +\epsilon} \right)$ as stated in
Theorem \ref{thm1}.

We first define certain weighted H\"older norms,
where the weighting is crucial for the proof.

\begin{definition} \label{defn1}
Given $\alpha \in (0,1)$ and $\gamma, \sigma, \Lambda > 0$, let 
$\parallel u \parallel_{X^{0, \alpha}_{\gamma,\sigma,\Lambda}}$ be the supremum of
\begin{equation} \label{5.1}
t^\gamma \left( r + t^{- \frac12} \right)^\sigma 
\left[
|u(m,t)| + 
\left( 1 + t^{\frac12} r \right)^{2 \alpha}
\frac{|u(m,t) - u(m^\prime, t^\prime)|}{\left( d_t^2(m,m^\prime) + |t-t^\prime| \right)^\alpha} \right],
\end{equation}
where
\begin{itemize}
\item $r \in (0, 2\delta]$, 
\item $\Lambda \le t \le t^\prime \le t + \left( 1 + t^{\frac12} r \right)^2$ and
\item 
$|z|, |z^\prime| \in \left[ \frac12 r, r + t^{-\frac12} \right]$.
\end{itemize}
Here $|z|$ denotes the magnitude of the $z$-coordinate of $m$ if $m \in U_{x}$ and
is $\delta$ otherwise, and similarly for $z^\prime$. The distance $d_t(m, m^\prime)$ is 
measured with $g_{mod}(t)$.
Put
\begin{equation} \label{5.2}
\parallel u \parallel_{X^{1, \alpha}_{\gamma,\sigma,\Lambda}} = 
\parallel u \parallel_{X^{0, \alpha}_{\gamma,\sigma,\Lambda}} +
\parallel \nabla u \parallel_{X^{0, \alpha}_{\gamma,\sigma+1,\Lambda}} +
\parallel \Hess u \parallel_{X^{0, \alpha}_{\gamma,\sigma+2,\Lambda}} + 
\parallel \partial_t u \parallel_{X^{0, \alpha}_{\gamma,\sigma+2,\Lambda}},
\end{equation}
where pointwise norms are taken with respect to $g_{mod}(t)$ and
we use time-$t$ parallel transport along minimizing geodesics to define tensor differences
for the H\"older norm.
\end{definition}

The conditions $|z|, |z^\prime| \in \left[ \frac12 r, r + t^{-\frac12} \right]$ and
$t \le t^\prime \le t + \left( 1 + t^{\frac12} r \right)^2$ mean that $(m^\prime, t^\prime)$ lies
in a quasiparabolic neighborhood of $(m,t)$.
The term $1 + t^{\frac12} r$ appears because 
$1 + t^{\frac12} |z|$
is comparable to the time-$t$
injectivity radius at $m$ for $g_{mod}$; see the proof of Lemma \ref{lem5}.

Let $X^{0, \alpha}_{\gamma,\sigma,\Lambda}$ and $X^{1, \alpha}_{\gamma,\sigma,\Lambda}$ be the
corresponding Banach spaces, where we impose the initial condition $u(\Lambda)=0$ on elements
$u \in X^{1, \alpha}_{\gamma,\sigma,\Lambda}$.

Using Lemma \ref{lem4}, for any $\epsilon > 0$ we can find 
$\alpha, \sigma > 0$ small so that putting $\gamma = \frac43 - \epsilon$, we have that $\| f_{mod} \|_{X^{0, \alpha}_{\gamma+\alpha,\sigma+2,\Lambda}}$ is $o(1)$
as $\Lambda \rightarrow \infty$. Here to bump $\sigma$ up to be positive, we use the fact that the support of $f_{mod}$ is in
$\{(z,t) : |z| \le t^{-a} \}$.

We now rewrite (\ref{2.5}) as
\begin{equation} \label{5.3}
\frac{\partial u}{\partial t} - \triangle_{g_{mod}} u = Q(u) - f_{mod},
\end{equation}
where 
\begin{align} \label{5.4}
Q(u) = & \log
\frac{(\omega_{mod} + \sqrt{-1} \partial \overline{\partial} u)^n}{\omega_{mod}^n} - \triangle_{g_{mod}} u \\
= &
\Tr \log \left( I + \omega_{mod}^{-1} \sqrt{-1} \partial \overline{\partial} u \right) - 
\Tr \left( \omega_{mod}^{-1} \sqrt{-1} \partial \overline{\partial} u \right). \notag
\end{align}
Algebraically, there is some $C < \infty$ so that $|Q(u)| \le C |\omega_{mod}^{-1} \sqrt{-1} \partial \overline{\partial} u|^2$. If $\epsilon$, $\alpha$ and $\sigma$ are sufficiently small then
whenever $\parallel u \parallel_{X^{1,\alpha}_{\gamma,\sigma,\Lambda}} \le 1$, there is a
uniform bound showing that
$\parallel Q(u) \parallel_{X^{0,\alpha}_{\gamma+\alpha,\sigma+2,\Lambda}}$ is $o(1)$ as
$\Lambda \rightarrow \infty$.

We now set up a fixed point problem, taking spatially constant functions into account,
as follows.  Given $v \in X^{1,\alpha}_{\gamma,\sigma,\Lambda}$,
we form $Q(v) - f_{mod} \in X^{0,\alpha}_{\gamma+\alpha,\sigma+2,\Lambda}$ and solve
\begin{equation} \label{5.5}
\frac{\partial u}{\partial t} - \triangle_{g_{mod}} u = Q(v) - f_{mod} + \phi
\end{equation}
for $u \in X^{1,\alpha}_{\gamma,\sigma,\Lambda}$ with
$\int_M u(t) \dvol_{g_{mod}(t)} = 0$,
where $\phi$ is a function just of $t$.
As in \cite{Brendle-Kapouleas (2017)}, when adapted to the case of immortal flows,
there is a unique solution and an estimate 
$\| u \|_{X^{1,\alpha}_{\gamma,\sigma,\Lambda}} \le \const 
\| Q(v) - f_{mod} \|_{X^{0,\alpha}_{\gamma+\alpha,\sigma+2,\Lambda}}$.

(To compare with \cite{Brendle-Kapouleas (2017)}, it is convenient to
rewrite (\ref{5.5}) in terms of the normalized model flow $\widehat{g}_{mod}(t) =
t^{-1} g_{mod}(t)$, with bounded diameter. Putting $\widehat{u}(t) = t^{-1} u(t)$,
$\widehat{v}(t) = t^{-1} v(t)$ and 
$\widehat{t} = \log t$, equation (\ref{5.5}) is
equivalent to 
\begin{equation} \label{5.6}
\frac{\partial \widehat{u}}{\partial \widehat{t}} - \triangle_{\widehat{g}_{mod}} \widehat{u}
+ \widehat{u} = 
\widehat{Q}(\widehat{v}) - f_{mod} + \phi.
\end{equation}
The only constraints to worry about come from constant functions.)

As in \cite{Brendle-Kapouleas (2017)}, the map from $v$ to $u$
gives a continuous map $\tau$ from the Banach space $X^{1,\alpha}_{\gamma,\sigma,\Lambda}$ to itself.  
The unit ball in $X^{1,\alpha}_{\gamma,\sigma,\Lambda}$ is a convex compact subset of
the Banach space
$X^{1,\widetilde{\alpha}}_{\widetilde{\gamma},\sigma,\Lambda}$, where
$\widetilde{\alpha}$ is slightly less than $\alpha$ and $\widetilde{\gamma}$ is slightly
less than $\gamma$.  The map $\tau$ extends to a similarly defined map from
$X^{1,\widetilde{\alpha}}_{\widetilde{\gamma},\sigma,\Lambda}$ to itself.
For large $\Lambda$, the above estimates show that the unit ball in 
$X^{1,\alpha}_{\gamma,\sigma,\Lambda}$ is sent by $\tau$
to itself.  The Schauder fixed point theorem implies that there is some $u \in 
X^{1,\alpha}_{\gamma,\sigma,\Lambda}$ so that $\tau(u) = u$. Applying $\partial \overline{\partial}$
to (\ref{5.5}) with $v=u$ shows that
$\omega_{mod} + \sqrt{-1} \partial \overline{\partial} u$ is a K\"ahler-Ricci flow solution
on $M$ that exists for all time greater than or equal to $\Lambda$. It equals $\omega_{mod}(\Lambda)$
at time $\Lambda$, as $u(\Lambda) = 0$.

Since $u \in 
X^{1,\alpha}_{\gamma,\sigma,\Lambda}$, 
we have
\begin{equation} \label{5.7}
    | \Hess u | \le \frac{\const}{t^\gamma (|z| + t^{- \frac12})^{\sigma + 2}}
    \le \frac{\const}{t^{\gamma - \frac{\sigma}{2} - 1}},
\end{equation}
which is almost $O \left( t^{- \: \frac13} \right)$.

We can improve the convergence rate by using the fact that
\begin{equation} \label{5.8}
    | u | \le \frac{\const}{t^\gamma (|z| + t^{- \frac12})^{\sigma}}
    \le \frac{\const}{t^{\gamma - \frac{\sigma}{2}}} \le \const t^{-1}.
\end{equation}
Denote the norm in $X^{0,\alpha}_{0,0,\Lambda}$ by the $C^\alpha$-norm.
From the estimates in Lemma \ref{lem3} and 
the proof of Lemma \ref{lem4}, when $|z| \le \frac12 t^{-a}$ we have
$\|f_{mod}(t)\|_{C^\alpha} \le \const \max_{|z| \le \frac12 t^{-a}} \left( |z| + t^{-\frac12} \right)^4
\le \const t^{- \: \frac43}$ for large $t$, while if $\frac12 t^{-a} \le |z| \le t^{-a}$ then
$\|f_{mod}(t)\|_{C^\alpha} \le \const t^{4a-2} = \const t^{-\: \frac23}$. Hence
$\|f_{mod}(t)\|_{C^\alpha}$ is $O \left( t^{- \: \frac23} \right)$ on $M$.
Also, since $u \in X^{1,\alpha}_{\gamma,\sigma,\Lambda}$, for any $\epsilon^\prime > 0$ we can choose $\epsilon$, $\alpha$ and $\sigma$ so that
$\|Q(u)(t)\|_{C^\alpha}$ is $O \left( t^{- \: \frac23 +\epsilon^\prime} \right)$ on $M$.
From (\ref{5.5}), we have
\begin{align} \label{5.9}
0 = \frac{d}{dt} \int_M u(t) \dvol_{g_{mod}(t)} = 
& \int_M \left( {Q}(u) - f_{mod} \right) \dvol_{g_{mod}(t)}  + 
\phi(t) \vol_{g_{mod}(t)} + \\ 
& \int_M u(t) \frac{d}{dt} \dvol_{g_{mod}(t)}. \notag
\end{align}
Since $|\frac{d}{dt} \dvol_{g_{mod}(t)}| \le \const \dvol_{g_{mod}(t)}$, we conclude that
$\phi(t)$ is $O \left( t^{- \: \frac23 +\epsilon^\prime} \right)$.
Using Lemma \ref{lem5} below, we can apply the parabolic Schauder lemma \cite[Theorem 4.9]{Lieberman (1996)} to (\ref{5.5}).
It gives a local result of the form 
$\|u\|_{C^{2,\alpha}} \le \const \left( \|u\|_{C^0} + \|Q(u) - f_{mod} + \phi \|_{C^\alpha}\right)$,
where local is in the sense of Lemma \ref{lem5} and the $C^{2, \alpha}$-norm 
is with respect to the uniformly controlled product charts from Lemma \ref{lem5}, or equivalently with respect to 
$g_{mod}(t)$.
Hence $\|u\|_{C^{2,\alpha}}$ is $O \left( t^{- \: \frac23 +\epsilon^\prime} \right)$, so
$\omega_{mod} + \sqrt{-1} \partial \overline{\partial} u$ is $K$-biLipschitz to
$\omega_{mod}$ where $K = 1 + O(t^{- \: \frac23 +\epsilon^\prime})$.

\begin{lemma} \label{lem5}
There are $\rho > 0$ and $C < \infty$ so that for all $(m,t) \in M \times [T, \infty)$, the region
$B(m,\rho) \times [t, t+\rho^2]$, endowed with the Riemannian metric
$g_{mod}(s) + ds^2$, is $C$-close in the $C^3$-topology to a Euclidean product region.
\end{lemma}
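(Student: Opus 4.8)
The plan is to split $M \times [T,\infty)$ into the expanding orbifold region, the Eguchi--Hanson cap region, and the gluing annulus, and verify a uniform local $C^3$-Euclidean-product model in each. First I would handle the region where $|z| \ge \frac12 t^{-a}$ (including all of $M - U_x$): there the model flow is, up to an $s$-dependent affine rescaling of the orbifold coordinates, the expanding flow $t\, g_{KE}$ (or, in the gluing annulus, a controlled perturbation thereof by the harmonic-to-leading-order error terms of \S\ref{sect4}). Passing to the normalized flow $\widehat g_{mod}(t)=t^{-1}g_{mod}(t)$ and the time variable $\widehat t = \log t$, as in (\ref{5.6}), one has a smooth flow on a compact orbifold with all time-derivatives and spatial covariant derivatives of the metric bounded uniformly in $\widehat t$; a standard compactness/parabolic-rescaling argument then produces $\rho, C$ so that $B(m,\rho)\times[\widehat t, \widehat t+\rho^2]$ with $\widehat g_{mod}+d\widehat t^2$ is $C^3$-close to a Euclidean product, and rescaling back by $t$ (harmless since the parabolic rescaling is exactly what converts $g_{mod}+dt^2$ near scale $1$ to $\widehat g_{mod}+d\widehat t^2$) gives the claim with the stated quasiparabolic box $B(m,\rho)\times[t,t+\rho^2]$. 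The only subtlety is the orbifold point itself: there the local model is $B(0,\delta')/\Z_2 \subset \C^2/\Z_2$, but after gluing in the cap this point has been removed, so one only ever sees smooth charts.

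Next I would treat the deep cap region $|z| \le \frac12 t^{-a}$, where $\omega_{mod}=\omega_{EH}^{(1)}$. Here I would use the $(s,\eta)$ coordinates of \S\ref{sect3}: $\widehat G^{(k)}(s,\eta)$ with $s=t$, $\eta = t|z|^2$. By Lemma \ref{lem2} the metric $\omega_{EH}^{(1)}$ is uniformly biLipschitz to the static Eguchi--Hanson metric $\omega_{EH}^{(0)}$ on $T^*\C P^1$, and more: all derivatives in $\eta$ of $\widehat G^{(1)}$ differ from those of the exact Ricci-flat $G^{(0)}$ by terms that are $O(s^{-1})$ times explicit powers of $(1+\eta)$. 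Since $\eta$ ranges over $[0, s^{1-2a}]$ and the curvature is concentrated in $\eta = O(1)$, the rescaled-by-$s^{-1}$ (equivalently, in the normalized flow) metric together with $d\widehat t^2$ is $C^3$-close, uniformly in $s$, to the fixed product $(T^*\C P^1, g_{EH}^{(0)})\times \R$. The injectivity-radius comparison that makes the box size work is exactly the assertion that $1+t^{1/2}|z| = 1 + \eta^{1/2}$ is comparable to the $g_{mod}(t)$-injectivity radius at $m$: near $\eta = O(1)$ this is $\sim 1$, matching the size of the Eguchi--Hanson neck, and for large $\eta$ it grows like the conical radius, again matching. I would record this comparison explicitly since it is what pins down the scale $\rho$ and the quasiparabolic neighborhood in Definition \ref{defn1}.

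Finally I would glue the three estimates: since the constants $\rho, C$ obtained in each region are uniform, taking the minimum of the $\rho$'s and maximum of the $C$'s gives a single pair working on all of $M \times [T,\infty)$; one only needs $T$ large enough that Lemmas \ref{lem2} and \ref{lem3} apply and the gluing region is genuinely conical. The main obstacle is the gluing annulus $\frac12 t^{-a} \le |z| \le t^{-a}$: there neither coordinate system is adapted, and one must check that the $C^3$-norm of the difference between $\omega_{mod}$ and the relevant flat cone metric, measured in the normalized flow, is small uniformly in $t$. This follows from the asymptotic expansions (\ref{4.7})--(\ref{4.10}) and the choice $a=\frac14$, which makes the leading discrepancy terms $t\,C_{a\overline b c\overline d}z^a\overline z^b z^c\overline z^d - \frac16 t|z|^4$ and $\frac{1}{2b^2 t|z|^2}$ small (and harmonic to leading order, so their second derivatives do not spoil positivity), but the bookkeeping of the cutoff $\sigma(t^a|z|)$ and its first three derivatives against these expansions is the place where care is required; this is essentially the same computation already carried out in the proof of Lemma \ref{lem4}, now pushed one derivative further.
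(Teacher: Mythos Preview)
Your three-region decomposition matches the paper's, and the reduction from $\omega_{EH}^{(1)}$ to $\omega_{EH}^{(0)}$ via the $O(s^{-1})$ corrections is exactly what the paper does. But there is a genuine gap in your treatment of the cap region $|z|\le \tfrac12 t^{-a}$. You write that the ``rescaled-by-$s^{-1}$ (equivalently, in the normalized flow) metric together with $d\widehat t^2$ is $C^3$-close, uniformly in $s$, to the fixed product $(T^*\C P^1, g_{EH}^{(0)})\times \R$.'' This is false: the area of the exceptional $\C P^1$ in $g_{mod}^{(0)}(t)$ is constant in $t$, so in $t^{-1}g_{mod}^{(0)}(t)$ it shrinks like $t^{-1}$, and the normalized flow is \emph{not} close to any fixed Eguchi--Hanson metric near the zero section. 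Your normalized-flow viewpoint, which is tailored to the expanding region, simply does not see the cap correctly.

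The paper's mechanism is different and is the missing idea. Rather than rescaling the metric, it uses the one-parameter family of automorphisms $\alpha_\tau$ of $T^*\C P^1$ (fixing the zero section, acting by $z\mapsto\tau z$ on $\C^2/\Z_2$) and the fact that $g_{mod}^{(0)}(t)$ is \emph{isometric} to the single fixed metric $b^{-1}g_{EH}$ via pullback by $\alpha_{\sqrt{bt}}$. Bounded geometry of $b^{-1}g_{EH}$ (together with its conical structure at infinity) then gives the spatial $C^3$-Euclidean model at scale $\rho$ uniformly in $t$. The time variation over $[t',t'+\rho^2]$ is handled by writing $g_{mod}^{(0)}(t)=b^{-1}\alpha_{\sqrt{t/t'}}^*\,\alpha_{\sqrt{bt'}}^*g_{EH}$ and observing $t/t'\in[1,1+\rho^2/t']$, so the additional pullback is $C^3$-close to the identity uniformly over $T^*\C P^1$. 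Your $(s,\eta)$ change of variable is in fact this very isometry at the level of potentials, but you then discard it in favor of a metric rescaling that destroys the point. Separately, your normalized-flow argument for the outer region is more elaborate than needed (the paper dismisses $|z|\ge\tfrac12 t^{-a}$ in one line, since $t\,g_{KE}$ has curvature $O(t^{-1})$ and relative time variation $O(\rho^2/t)$ on the box), and your ``rescaling back by $t$'' does not actually carry a box $B(m,\rho)\times[\widehat t,\widehat t+\rho^2]$ in $(\widehat g_{mod},d\widehat t^2)$ to a box $B(m,\rho)\times[t,t+\rho^2]$ in $(g_{mod},dt^2)$: the spatial radius becomes $\sqrt{t}\,\rho$ and the time interval becomes $[t,te^{\rho^2}]$.
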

\begin{proof}
The statement can be checked for $|z| \ge \frac12 t^{-a}$ using the asymptotic estimates
in the proof of Lemma \ref{lem4}.  For $|z| \le \frac12 t^{-a}$,
using (\ref{3.13}) we will have
uniform $C^3$-closeness between
$g_{mod}^{(1)}(s) + ds^2$ and $g_{mod}^{(0)}(s) + ds^2$. Hence it is enough to verify the claim for 
$g_{mod}^{(0)}(s) + ds^2$.  

Let $\alpha_\tau$ be the automorphism of $T^* \C P^1$ which fixes the exceptional $\C P^1$ and acts on
$\C^2/\Z_2$ by sending $z$ to $\tau z$. 
Then $g_{mod}^{(0)}(t)$ is isometric to $b^{-1} \alpha_{\sqrt{bt}}^* g_{EH}$. From the bounded
geometry of $g_{EH}$, and its conical structure at infinity, we can choose $\rho > 0$
and $C^\prime < \infty$ so that
for any $m \in T^* \C P^1$, the product region $B(m,\rho) \times [0, \rho^2]$ with the Riemannian metric
$b^{-1} g_{EH} + dt^2$ is $C^\prime$-close in the $C^3$-topology
to a Euclidean product region. 
Given $t^\prime \in [T, \infty)$, the same will be true for
$B(m,\rho) \times [t^\prime, t^\prime+\rho^2]$, endowed with the Riemannian metric
$b^{-1} \alpha_{\sqrt{bt^\prime}}^* g_{EH} + dt^2$. Since
$b^{-1} \alpha_{\sqrt{bt}}^* g_{EH} = b^{-1} \alpha_{\sqrt{\frac{t}{t^\prime}}}^* \alpha_{\sqrt{bt^\prime}}^* g_{EH}$ and $\frac{t}{t^\prime} \in \left[ 1, 1+ \frac{\rho^2}{t^\prime} \right]$,
there is some $C < \infty$ independent of $t^\prime$ so that the region 
$B(m,\rho) \times [t^\prime, t^\prime+\rho^2]$, with the metric
$g^{(0)}_{mod}(t) + dt^2$, is $C$-close in the $C^3$-topology to a Euclidean product region.
\end{proof}

\section{Proof of Theorem \ref{thm2}} \label{sect6}

In this section we look at the case when the K\"ahler-Einstein orbifold $X$ is complex hyperbolic.
By doing the gluing at a scale $|z| \sim t^{-a}$ with $a$ close to zero, we show that we can improve
the biLipschitz closeness to $K(t) = 1 + O \left( t^{-2+\epsilon} \right)$. We then prove a
stability result saying that for the potential flow (\ref{2.5}), the $C^0$-norm of the time-$t$ potential
remains close to the $C^0$-norm of an initial time-$T$ potential, if $T$ is sufficiently large.

Suppose that $\omega_{KE}$ is a complex hyperbolic orbifold metric, again with isolated singularities
and $\Z_2$ isotropy group; such orbifolds exist \cite{Reid-Stover (2025)}.
Given $k \ge 0$, $a \in \left( 0, \frac12 \right)$ and
$|z| < \delta$, for large $t$ define 
\begin{equation} \label{6.1}
\phi_{mod}^{(k)}(t,z) = \sigma \left( t^a |z| \right) \phi_{EH}^{(k)}(t,z) + \left( 1 - \sigma \left( t^a |z| \right) \right)
\phi_{X}(t,z)
\end{equation}
and
\begin{equation} \label{6.2}
\omega_{mod}^{(k)}(t,z) =
\sqrt{-1} \partial \overline{\partial} \phi_{mod}^{(k)}(t,z).
\end{equation}
We extend it to the rest of $X$ as $\omega_X(t)$. We obtain a model flow 
on $M$, the
complex manifold that is the result of gluing a truncated copy of $T^\star \C P^1$ to the orbifold
point of $X$.

Putting
\begin{equation} \label{6.3}
f_{mod}^{(k)}(t,z) = 
\frac{\partial \phi_{mod}^{(k)}}{\partial t} - \log \det \left(  \partial_i \overline{\partial}_j \phi_{mod}^{(k)}
\right) \text{if } |z| < t^{-a}
\end{equation}
and extending it by zero to $M$,
we have
\begin{equation} \label{6.4}  
\sqrt{-1} \partial \overline{\partial} f_{mod}^{(k)} = \frac{d \omega_{mod}^{(k)}}{dt} + \Ric(\omega_{mod}^{(k)}).
\end{equation}

\begin{lemma} \label{lem6}
For any $\epsilon > 0$ and $\alpha \in (0,\frac12]$, we can choose $k$ and $a$ so that for large $t$, there
are uniform bounds
\begin{enumerate}
\item $t^{2-\epsilon} \left( |z| + t^{- \frac12} \right)^2 |f_{mod}^{(k)}(t,z)| \le \const$, and
\item 
$t^{2-\epsilon} \left( |z| + t^{- \frac12} \right)^2
\left( 1 + t^{\frac12} |z| \right)^{2 \alpha}
\frac{|f_{mod}^{(k)}(m,t) - f_{mod}^{(k)}(m^\prime, t^\prime)|}{\left( d_t^2(m,m^\prime) + |t-t^\prime| \right)^\alpha}
\le \const$
\end{enumerate}
whenever $(m^\prime, t^\prime)$
satisfies
$|z|, |z^\prime| \in \left[ \frac12 r, r + t^{-\frac12} \right]$ and
$t \le t^\prime \le t + \left( 1 + t^{\frac12} r \right)^2$
for some $r \in (0, 2\delta]$.
\end{lemma}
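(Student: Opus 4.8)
The plan is to follow the proof of Lemma \ref{lem4}, exploiting two features special to the complex hyperbolic case. First, the orbifold flow $\phi_X(t) = 2(t\log t - t) + t\phi_{KE}$ is now an \emph{exact} solution of the potential flow (\ref{2.3}), so $f_X \equiv 0$. Second, since the complex hyperbolic potential is $\phi_{KE}(z) = -3\log(1-\tfrac13|z|^2) = \sum_{j\ge0}\tfrac{1}{(j+1)3^j}|z|^{2(j+1)}$, the expansion (\ref{3.12}) together with Lemma \ref{lem1} shows that $\phi_X$ reproduces, order by order in $s^{-1}$, the leading large-$\eta$ asymptotics $\tfrac{1}{(j+1)3^j}\eta^{j+1}$ of $G^{(j)}$ for every $j$. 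I would split $\{|z| \le t^{-a}\}$ (outside of which $f_{mod}^{(k)}$ vanishes) into the \emph{core} $\{|z| \le \tfrac12 t^{-a}\}$, where $\phi_{mod}^{(k)} = \phi_{EH}^{(k)}$, and the \emph{gluing annulus} $\{\tfrac12 t^{-a} \le |z| \le t^{-a}\}$, and I would fix the parameters in the order: first choose $a > 0$ with $2a \le \epsilon$, then $k$ with $2a(k+2) \ge 2-\epsilon$. In the core $f_{mod}^{(k)} = f_{EH}^{(k)}$, so, since $|z| + t^{-1/2} \le 2t^{-a}$ there, Lemma \ref{lem3} gives $t^{2-\epsilon}(|z|+t^{-1/2})^2|f_{EH}^{(k)}| \le \const\, t^{2-\epsilon-2a(k+2)} \le \const$, and (2) follows from the derivative bounds of Lemma \ref{lem3} and the interpolation inequality of (\ref{4.11}); on $\{|z| \ge t^{-a}\}$ one has $f_{mod}^{(k)} = f_X = 0$.

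On the gluing annulus, where $\eta = t|z|^2 \in [\tfrac14 t^{1-2a},\, t^{1-2a}]$, I would write $\phi_{mod}^{(k)} = \phi_X + \sigma\psi$ with $\psi := \phi_{EH}^{(k)} - \phi_X$ and $\sigma = \sigma(t^a|z|)$; because $f_X \equiv 0$,
\[
f_{mod}^{(k)} = \partial_t(\sigma\psi) - \Tr\log\Big(I + (\omega_X)^{-1}\sqrt{-1}\,\partial\overline\partial(\sigma\psi)\Big),
\]
and since $(g_X)_{i\overline j} = t\,(\delta_{ij} + O(|z|^2))$, the trace term equals $t^{-1}\triangle_{\mathrm{flat}}(\sigma\psi)$ up to a factor $1 + O(t^{-2a})$; one also checks $\partial\overline\partial(\sigma\psi) = O(t^{-1+4a})$, so $\omega_{mod}^{(k)}$ is positive and $f_{mod}^{(k)}$ is well defined. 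The substance is to control $\psi$ and its derivatives. From (\ref{3.6}), (\ref{3.13}), (\ref{3.12}) and Lemma \ref{lem1},
\[
\psi = \Big(G^{(0)}(s,\eta) - 2(s\log s - s) - \eta\Big) + \sum_{j=1}^{k} s^{-j}\Big(G^{(j)}(\eta) - \tfrac{1}{(j+1)3^j}\eta^{j+1}\Big) - \sum_{j > k}\tfrac{1}{(j+1)3^j}\,s^{-j}\eta^{j+1}.
\]
The first bracket equals $-\tfrac{1}{2b^2\eta} + O(\eta^{-3}) = -\tfrac{1}{2b^2 t|z|^2} + O(\eta^{-3})$, whose leading term is flat-harmonic since $\partial_i\overline\partial_i|z|^{-2} = 0$ (as in the proof of Lemma \ref{lem4}); the crucial refinement of Lemma \ref{lem1} that I would establish — by re-running the iteration (\ref{3.9})--(\ref{3.10}) and counting degrees, starting from the fact that $G^{(0)}$ differs from its leading asymptotics only at order $\eta^{-1}$ — is that each correction $G^{(j)}(\eta) - \tfrac{1}{(j+1)3^j}\eta^{j+1}$, together with its $\eta$-derivatives, is of degree at most $j-1$ in $\eta$, up to $\log\eta$ factors; and the last sum is the tail of the complex hyperbolic Taylor series, of size $O(t|z|^{2(k+2)})$ with flat Laplacian $O(t|z|^{2(k+1)})$. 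Substituting these into the formula for $f_{mod}^{(k)}$ and bookkeeping powers of $t$ — recalling that each coordinate derivative of $\sigma$ costs a factor $t^a$ while $\partial_t\sigma = O(t^{-1})$ on the annulus — the $\partial_t(\sigma\psi)$ term is $O(t^{-2+2a})$; the contribution of the $j=0$ piece through the trace (including its $\sigma$-derivatives) is $O(t^{-2+4a})$; the contribution of the corrections for $1 \le j \le k$ is $O(t^{-2+2a}\log t)$ (dominated by $j=1$); and the tail contributes $O(t^{-2a(k+1)})$. Hence $|f_{mod}^{(k)}| \le \const\, t^{-2+\epsilon}(|z|+t^{-1/2})^{-2}$ on the annulus by the choices $2a \le \epsilon$ and $2a(k+2) \ge 2-\epsilon$, which is (1), and (2) follows by differentiating these estimates once more and interpolating exactly as in (\ref{4.11}).

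The main obstacle is the refinement of Lemma \ref{lem1}: one must show that after cancelling the exact solution $\phi_X$ against the matched part of $\widehat G^{(k)}$, the surviving corrections $G^{(j)}(\eta) - \tfrac{1}{(j+1)3^j}\eta^{j+1}$ are not merely $O(\eta^j)$ but of strictly lower degree $O(\eta^{j-1}\log\eta)$. Without this the flat Laplacian of the $j=2$ correction would contribute a term of size $\sim t^{-1-2a}$, which violates the required bound once $a$ is small; with it, every correction is harmonic enough for its contribution to the trace term to be negligible. This is the precise sense in which, as in Lemma \ref{lem4}, the gluing errors are ``harmonic to leading order'', now at every order of the expansion, which is also what lets one take the gluing scale $a$ as small as $\tfrac12\epsilon$ (at the cost of a correspondingly large $k$) and obtain the improved rate $2-\epsilon$.
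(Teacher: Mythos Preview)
Your proof is correct and follows essentially the same strategy as the paper: split into core and annulus, use Lemma \ref{lem3} on the core, and on the annulus expand the discrepancy $\phi_X - \phi_{EH}^{(k)}$ and use that its leading pieces are flat-harmonic. Two minor differences are worth noting. First, you take $\phi_X$ as the base on the annulus (so $f_X \equiv 0$ and the whole error is carried by $\sigma\psi$), whereas the paper takes $\phi_{EH}^{(k)}$ as the base (so an $f_{EH}^{(k)}$ term appears, handled by Lemma \ref{lem3}); these are algebraically equivalent decompositions and either works. Second, and more substantively, you are explicit about the refinement of Lemma \ref{lem1} --- that $G^{(j)}(\eta) - \tfrac{1}{(j+1)3^j}\eta^{j+1}$ is $O(\eta^{j-1}\log\eta)$ rather than merely $O(\eta^j)$ --- whereas the paper's assertion that ``$\phi_X - \phi_{EH}^{(k)}$ is asymptotic to $\tfrac{1}{2b^2 t|z|^2} + \const\, t|z|^{2(k+1)}$'' implicitly uses this (as you correctly observe, the naive $O(\eta^j)$ bound would leave a $j=1$ contribution of order $t^{-1}$ in $f_{mod}^{(k)}$, too large when $a$ is small). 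For $j=1$ the refinement is visible directly from (\ref{3.11}); your proposed inductive argument via (\ref{3.9})--(\ref{3.10}) is the right way to get it for general $j$.
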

\begin{proof}
For part (1), it is enough to look at the region $|z| < t^{-a}$, as $f_{mod}^{(k)}$ vanishes outside of the region.
The region $|z| \le \frac12 t^{-a}$ is covered by Lemma \ref{lem3}, so we can assume that
$\frac12 t^{-a} < |z| < t^{-a}$.
In this region, we can write
\begin{equation} \label{6.5}
\phi_{mod}^{(k)}(t,z) = 
\phi_{EH}^{(k)}(t,z) + 
\left( 1 - \sigma \left( t^a |z| \right) \right)
\left( \phi_X(t,z) - \phi_{EH}^{(k)}(t,z) \right).
\end{equation}
Then
\begin{align} \label{6.6}
f_{mod}^{(k)} = & f_{EH}^{(k)} + \frac{\partial}{\partial t} \left( (1-\sigma) ( \phi_X - \phi_{EH}^{(k)} )\right)
- \\
& \Tr \log \left( I + \left( \omega^{(k)}_{EH} \right)^{-1} \sqrt{-1} \partial \overline{\partial}
\left( (1-\sigma) ( \phi_X - \phi_{EH}^{(k)} )\right)
\right), \notag
\end{align}
where the argument of $\sigma$ is $t^a |z|$. Lemma \ref{lem3} gives the bounds for $f_{EH}^{(k)}$.

Since $\omega_{KE}$  is complex hyperbolic, we have
\begin{equation} \label{6.7}
\phi_X = 2(t \log t - t) -3 t \log \left( 1 - \frac13 |z|^2 \right).
\end{equation}
The leading asymptotics of $G^{(0)}$ are given in (\ref{4.7}).
In view of Lemma \ref{lem1} and (\ref{6.7}), in the given region, the relevant terms of
$\phi_X - \phi_{EH}^{(k)}$ are
$\frac{1}{2 b^2 t |z|^2} + \const t|z|^{2k+4}$. 
(The omitted logarithmic and lower-order
terms coming from the large-$\eta$ expansions of the $G^{(j)}$ have the same or
better size after applying $\partial_t$ and $t^{-1}\partial\bar\partial$, even
including cutoff derivatives.)
Taking $k$ large, we can neglect the 
$\const t|z|^{2k+4}$ term.
Also,
$\omega^{(k)}_{EH}$ is asymptotically $t \sqrt{-1}dz^i \wedge d\overline{z}^i$.
Using the fact that $\partial_i \overline{\partial}_i \frac{1}{|z|^2} = 0$, one finds that
$f_{mod}^{(k)}$ is $O \left( t^{4a-2} \right)$ in the given region, from which
part (1) follows. The proof of part (2) is similar.
\end{proof}

We can set $a = \frac{1}{k}$  and take $k$ large. The proof of Theorem \ref{thm1} goes through in the complex hyperbolic setting, changing $\gamma$
from $\frac43 - \epsilon$ to $2 - \epsilon$. 

Finally, we give a stability result for the flow.

\begin{proposition} \label{prop1}
There are some $a>0$ and $k \in \Z^+$ with the following property.
Construct the model flow $g_{mod}^{(k)}$ with gluing at scale
$|z| \sim t^{-a}$.
Given $\epsilon > 0$, there is some $T < \infty$ so that if
$u(t)$ is a solution to (\ref{2.5}) on $[T, \infty)$ 
with $\omega_{mod}^{(k)}(t) + \sqrt{-1} \partial \overline{\partial} u(t)$ positive then 
$\| u(t) \|_{C^0} \le \| u(T) \|_{C^0} + \epsilon$ for all $t \ge T$.
\end{proposition}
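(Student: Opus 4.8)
The plan is to run a parabolic maximum principle on the potential flow (\ref{2.5}) — understood here with $\omega_{mod} = \omega_{mod}^{(k)}$ and $f = f_{mod}^{(k)}$ from (\ref{6.3}) — controlling the time-drift of $\max_M u(\cdot,t)$ and $\min_M u(\cdot,t)$ by $\|f_{mod}^{(k)}(t)\|_{C^0}$, and then choosing the gluing data $(a,k)$ so that this forcing term is integrable in $t$ with arbitrarily small tail. The only real work is the second ingredient.

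First I would fix $a = \tfrac1k$ with $k$ large (it suffices to take $k \ge 5$) and construct $g_{mod}^{(k)}$ with gluing at scale $|z| \sim t^{-a}$ as in Section \ref{sect6}; for $T$ large, $\omega_{mod}^{(k)}(t)$ is an honest K\"ahler form on $M$ for all $t \ge T$ (Lemma \ref{lem2} near the cap, and positivity of $t\omega_{KE}$ away from it), so any solution $u$ of (\ref{2.5}) on $[T,\infty)$ is smooth and $\omega(t) := \omega_{mod}^{(k)}(t) + \sqrt{-1}\,\partial\overline{\partial} u(t)$ is a K\"ahler-Ricci flow. Next I would record the bound $\|f_{mod}^{(k)}(t)\|_{C^0} \le \const\, t^{-2 + 4/k}$: the function $f_{mod}^{(k)}(t,\cdot)$ is supported in $\{|z| \le t^{-1/k}\}$; on $\{|z| \le \tfrac12 t^{-1/k}\}$ it equals $f_{EH}^{(k)}$, which by Lemma \ref{lem3} is bounded there by $\const\,(|z| + t^{-1/2})^{2(k+1)} \le \const\, t^{-2 - 2/k}$ (using $\tfrac1k < \tfrac12$); and on the gluing annulus $\tfrac12 t^{-1/k} \le |z| \le t^{-1/k}$ the computation in the proof of Lemma \ref{lem6} gives $f_{mod}^{(k)} = O(t^{4a - 2}) = O(t^{-2 + 4/k})$. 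Since $-2 + 4/k < -1$ for $k \ge 5$, this yields $\int_T^\infty \|f_{mod}^{(k)}(s)\|_{C^0}\, ds \le \const\, T^{-1 + 4/k} \to 0$ as $T \to \infty$.

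Then I would set $M_+(t) = \max_M u(\cdot, t)$ and $M_-(t) = \min_M u(\cdot, t)$. By Hamilton's trick these are locally Lipschitz, and at a.e.\ $t$ there is a spatial maximizer $x_t$ (resp.\ minimizer $y_t$) with $\tfrac{d}{dt} M_+(t) = (\partial_t u)(x_t, t)$ and $\sqrt{-1}\,\partial\overline{\partial} u(x_t, t) \le 0$ (resp.\ $\ge 0$ at $y_t$). At $x_t$ one has $\omega(t) \le \omega_{mod}^{(k)}(t)$, hence the volume ratio is $\le 1$, so from (\ref{2.5})
\[
\frac{d}{dt} M_+(t) = \log \frac{\omega(t)^n}{\omega_{mod}^{(k)}(t)^n}\Big|_{x_t} - f_{mod}^{(k)}(x_t, t) \le \|f_{mod}^{(k)}(t)\|_{C^0},
\]
and symmetrically $\tfrac{d}{dt} M_-(t) \ge - \|f_{mod}^{(k)}(t)\|_{C^0}$. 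Integrating from $T$ and using $M_-(t) \le M_+(t)$, one gets $|M_+(t)|, |M_-(t)| \le \|u(T)\|_{C^0} + \int_T^\infty \|f_{mod}^{(k)}(s)\|_{C^0}\, ds$, hence $\|u(t)\|_{C^0} \le \|u(T)\|_{C^0} + \int_T^\infty \|f_{mod}^{(k)}(s)\|_{C^0}\, ds$ for all $t \ge T$. Given $\epsilon > 0$, choosing $T$ so large that the tail integral is below $\epsilon$ finishes the proof.

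The main obstacle — really the only substantive point — is the time-integrability of $\|f_{mod}^{(k)}(t)\|_{C^0}$: a naive bound from Lemma \ref{lem6}(1) gives only $O(t^{-1 + \epsilon})$, which is not integrable, so one genuinely must use that on the bulk of its support $f_{mod}^{(k)} = f_{EH}^{(k)}$ obeys the far stronger estimate of Lemma \ref{lem3}, that the support shrinks like $t^{-a}$, and that $a$ may be taken arbitrarily small (by enlarging $k$) so that the gluing-annulus contribution $t^{4a - 2}$ is also summable. Everything else is the standard parabolic maximum principle.
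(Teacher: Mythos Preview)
Your proof is correct and follows essentially the same approach as the paper's: both establish that $\|f_{mod}^{(k)}(t)\|_{C^0}$ is $O(t^{-2+\mu})$ for small $\mu$ by combining Lemma \ref{lem3} on the cap with the $O(t^{4a-2})$ annulus bound from the proof of Lemma \ref{lem6}, and then feed this integrable forcing into the parabolic maximum/minimum principle applied to (\ref{2.5}). Your version is slightly more explicit (fixing $a=1/k$ with $k\ge 5$, invoking Hamilton's trick), but the argument is the same.
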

\begin{proof}
We first claim that for any $\mu > 0$, if $a$ is sufficiently small and $k$ is sufficiently large
then $\|f_{mod}^{(k)}\|_{C^0}$ is $O \left( t^{-2+\mu} \right)$. In the region $\frac12 t^{-a} <
|z| < t^{-a}$, the claim follows from Lemma \ref{lem6}.  Lemma \ref{lem3} implies that the claim is
true in the region $|z| \le \frac12 t^{-a}$ if we take $k$ large enough.

If $\mu$ is small then $\lim_{T \rightarrow \infty} \int_T^\infty t^{-2+\mu} \: dt = 0$. 
Applying the maximum principle to (\ref{2.5}) gives $\frac{d}{dt} \max u(t) \le - \min f_{mod}^{(k)}(t)$, where
the inequality is understood in the sense of forward differences.  The minimum principle gives
$\frac{d}{dt} \min u(t) \ge - \max  f_{mod}^{(k)}(t)$. Hence
\begin{equation} \label{6.8}
\max |u(t)| \le \max |u(T)| + \int_T^t \max |f_{mod}^{(k)}(s)| \: ds.
\end{equation}
We can take $T$ large enough that $\int_T^\infty \max |f_{mod}^{(k)}(s)| \: ds
\le \epsilon$.
\end{proof}

\end{document}